\newif\ifcolorcomments
\newcommand{\allowcomments}[4]{
\newcommand{#1}[1]{\ifdraft{\ifcolorcomments{\textcolor{#4}{##1 --#3}}\else{\textsl{ ##1 \ --#3}}\fi}\else{}\fi}
}
\newtheorem{theorem}{Theorem}[section]
\newtheorem{lemma}[theorem]{Lemma}
\newtheorem{proposition}[theorem]{Proposition}
\newtheorem{corollary}[theorem]{Corollary}
\theoremstyle{definition}
\newtheorem{definition}[theorem]{Definition}
\newcommand{\EE}{\mathcal E}
\newcommand{\FF}{\mathcal F}
\newcommand{\HH}{\mathcal H}
\newcommand{\N}{\mathbb N}
\newcommand{\R}{\mathbb R}
\newcommand{\UU}{\mathcal U}
\renewcommand{\text}{\textup}
\newcommand{\NPC}[1]{\ignorespaces}
\renewcommand{\H}{\mathbb H}
\newif\ifdraft\drafttrue
\def\N{\mathbb N}
\def\R{\mathbb R}
\def\H{\mathcal H}
\def\a{\alpha}
\newcommand\hdim{\dim_{\mathrm H}}
\newcommand{\subscript}[2]{$#1 _ #2$}
\renewcommand{\emptyset}{{\diameter}}
\newcommand*{\myDots}{\ifmmode\mathellipsis\else.\kern-0.07em.\kern-0.07em.\fi}
\DeclarePairedDelimiter\floor{\lfloor}{\rfloor}
\newcommand*{\defeq}{\stackrel{\text{def}}{=}}
\allowcomments{\commumtaz}{MH}{Mumtaz}{green}
\allowcomments{\comnikita}{NS}{Nikita}{blue}
\allowcomments{\combixuan}{BL}{Bixuan}{red}
\newcommand {\ignore}[1] {}
\newcommand{\commenty}[1]{}
\begin{document}

\title[A Hausdorff dimension analysis of an exceptional set]{A Hausdorff dimension analysis of sets with the product of consecutive vs single partial quotients in continued  fractions}

\author[Mumtaz Hussain]{Mumtaz Hussain}
\address{Mumtaz Hussain,  Department of Mathematical and Physical Sciences,  La Trobe University, Bendigo 3552, Australia. }
\email{m.hussain@latrobe.edu.au}

\author[Bixuan Li]{Bixuan Li}
\address{Bixuan Li, School of Mathematics and Statistics, Huazhong University of Science and Technology, Wuhan 430074, P. R. China}
\email{libixuan@hust.edu.cn}

\author{Nikita Shulga}
\address{Nikita Shulga,  Department of Mathematical and Physical Sciences,  La Trobe University, Bendigo 3552, Australia. }
\email{n.shulga@latrobe.edu.au}
\date{}

\maketitle

\numberwithin{equation}{section}

\begin{abstract}
We present a detailed Hausdorff dimension analysis of  the set of real numbers where the product of consecutive partial quotients in their continued fraction expansion grow at a certain rate but the growth of the single partial quotient is at a different rate.  We consider the set
\begin{equation*}
\FF(\Phi_1,\Phi_2) \defeq \EE(\Phi_1) \backslash \EE(\Phi_2)=\left\{x\in[0,1):
\begin{split} 
 a_n(x)a_{n+1}(x) & \geq\Phi_1(n) \text{\,\, for infinitely many } n\in\N \\
 a_{n+1}(x) & <\Phi_2(n)  \text{\,\, for all sufficiently large } n\in\N
\end{split}
\right\},
\end{equation*}
where $\Phi_i:\N\to(0,\infty)$ are any functions such that $\lim\limits_{n\to\infty} \Phi_i(n)=\infty$. We obtain some surprising results including the situations when $\FF(\Phi_1,\Phi_2)$ is empty for various non-trivial choices of $\Phi_i$'s. Our results contribute to the metrical theory of continued fractions by generalising several known results including the main result of [Nonlinearity, 33(6):2615--2639, 2020]. To obtain some of the results, we consider an alternate generalised set, which may be of independent interest, and calculate its Hausdorff dimension. One of the main ingredients is in the usage of the classical mass distribution principle;  specifically a careful distribution of the mass on the Cantor subset by introducing a new idea of two different types of probability measures.
\end{abstract}

\section{ Introduction}
It is well-known that every irrational number $x\in (0, 1)$ has a unique infinite continued fraction expansion. This expansion can be induced by the Gauss map  $T: [0,1)\to [0,1)$ defined by
\[T(0)=0, ~ T(x)=\frac{1}{x}-\floor*{\frac{1}{x}} \textmd{ for }x\in(0,1),\]
where $\lfloor x\rfloor$ denotes the integer part of $x$.  We write $x:=[a_{1}(x),a_{2}(x),a_{3}(x),\ldots ]$ for the continued fraction of $x$ where $a_1(x)=\lfloor 1/x \rfloor$, $a_{n}(x)= a_1(T^{n-1}(x))$ for $n\ge2$ are called the partial quotients of $x$. 

The metric theory of continued fractions concerns the quantitative study of properties of partial quotients for almost all $x\in(0, 1)$. This area of research is closely connected with metric Diophantine approximation, for example, fundamental theorems in this field by Khintchine (1924) and Jarn\'ik (1931) can be represented in terms of the growth of partial quotients. The classical Borel-Bernstein theorem (1912) states that the Lebesgue measure of the set 
\begin{equation*}
\EE_1(\Phi):=\left\{x\in [0, 1): a_n(x)\geq \Phi(n) \ {\rm for \ infinitely \ many} \ n\in \N\right\}
\end{equation*}
is either zero or one depending upon the convergence or divergence of the series $\sum_{n=1}^\infty \Phi(n)^{-1}$ respectively. Here and throughout  $\Phi:\N\to [1, \infty)$ will be an arbitrary function such that $\Phi(n)\to\infty$ as $n\to \infty$.  For rapidly increasing functions $\Phi$, the Borel-Bernstein (1911, 1912) theorem gives no  further information than the zero Lebesgue measure of $\EE_1(\Phi)$. To distinguish between the sets of zero Lebesgue measure,  the Hausdorff dimension is an appropriate tool. 
 When the function $\Phi(n)$ tends to infinity at a polynomial rate $n^a$, the Hausdorff dimension of $\EE_1(\Phi)$ was calculated by Good \cite{Good}, and when $\Phi(n)$ tends to infinity at a super-exponential rate $a^{b^n},$ the Hausdorff dimension was established by {\L}uczak \cite{Luczak}. For an arbitrary function $\Phi$ the dimension of $\EE_1(\Phi)$ was computed by Wang-Wu \cite{WaWu08}.
\begin{theorem}[{\protect\cite[Wang-Wu]{WaWu08}}]
\label{WaWu}

Let $\Phi :\mathbb{N}\rightarrow \mathbb [1, \infty)$. Suppose $$\log B=\liminf\limits_{n\rightarrow \infty }\frac{\log
\Phi (n)}{n} \ { and}\ \log b=\liminf\limits_{n\rightarrow \infty }\frac{\log
\log \Phi (n)}{n}.$$ 

\begin{itemize}
\item [\rm(i)] When $B=1$, $\dim_{H} \mathcal{E}_{1}(\Phi)=1.$

\item[\rm{(ii)}] When $B=\infty$, $\dim_{H} \mathcal{E}
_{1}(\Phi)=1/(1+b).$

\item[\rm{(iii)}] When $1<B<\infty$, $\dim_{H} \mathcal{E}
_{1}(\Phi)=s_{B}:=\inf \{s\geq 0 :\mathsf{P}(T, -s(\log B+\log |T^{\prime
}|))\le 0\},$ 
\end{itemize}
where $
T^{\prime }$ denotes the derivative of  the Gauss map $T$, and $\mathsf{P}$ represents the pressure function defined in the Subection \ref{Pressure Functions}.
\end{theorem}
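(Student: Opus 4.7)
The plan is to split along the three cases, with $1 < B < \infty$ being the core. The key geometric input, used throughout, is that $\EE_1(\Phi) = \limsup_n A_n$ with $A_n = \{x : a_n(x) \geq \Phi(n)\}$, and that within each rank-$(n-1)$ cylinder $I_{n-1}(a_1,\dots,a_{n-1})$ the subset $A_n \cap I_{n-1}$ is a sub-interval of length $\asymp |I_{n-1}|/\Phi(n)$, since $|I_n(a_1,\dots,a_n)| \asymp (a_n q_{n-1})^{-2}$ and the cylinders $\{I_n(\cdot,a_n) : a_n \geq \Phi(n)\}$ stack against one endpoint of $I_{n-1}$.

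For case (iii), the upper bound uses this sub-interval cover: fix $s > s_B$, so that $\mathsf{P}(T,-s(\log B + \log|T'|)) < 0$, i.e.\ $\mathsf{P}(T,-s\log|T'|) < s\log B$. Thermodynamic formalism for the Gauss map gives $\sum_{(a_1,\dots,a_{n-1})} |I_{n-1}|^s \asymp e^{n\mathsf{P}(T,-s\log|T'|)}$, so the Hausdorff $s$-premeasure of $\EE_1(\Phi)$ is dominated by $\sum_n \Phi(n)^{-s}\cdot e^{n\mathsf{P}(T,-s\log|T'|)}$. Since $\Phi(n) \geq (B-\epsilon)^n$ eventually (the liminf defining $B$), this is bounded by $\sum_n e^{n(\mathsf{P}(T,-s\log|T'|) - s\log(B-\epsilon))}$, which converges for $\epsilon$ small; hence $\dim_H \EE_1(\Phi) \leq s_B$. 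For the lower bound, fix $s < s_B$ and construct a Cantor set $K \subseteq \EE_1(\Phi)$: at a sparse sequence of indices $n_k$ force $a_{n_k} \in [\Phi(n_k), 2\Phi(n_k)]$, and at intermediate levels allow $a_n \in \{1,\dots,M\}$ for a large integer $M$. Place on $K$ a Gibbs-type product measure associated to the potential $-s(\log B + \log|T'|)$ and apply the Mass Distribution Principle.

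Cases (i) and (ii) follow by comparison. In (i), $B=1$ forces $\Phi(n) \leq (1+\epsilon)^n$ eventually for every $\epsilon > 0$, so $\EE_1((1+\epsilon)^n) \subseteq \EE_1(\Phi)$, and (iii) gives $\dim_H \EE_1(\Phi) \geq s_{1+\epsilon}$; continuity of the pressure together with Bowen's formula $\mathsf{P}(T,-\log|T'|) = 0$ yields $s_{1+\epsilon} \to 1$. In (ii), $B = \infty$ degenerates the pressure formulation, and both bounds follow from the standard {\L}uczak construction of cylinders where most partial quotients lie in $\{1,\dots,M\}$ and a single one is $\geq \Phi(n)$, balancing the entropy $\log M$ of the bounded factor against $\log\log\Phi(n) \approx n\log b$ to pin the dimension at $1/(1+b)$.

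The main obstacle is the local measure estimate $\mu(B(x,r)) \lesssim r^s$ at the large levels $n_k$ in case (iii). There the cylinder $I_{n_k}$ contracts by an extra factor $\Phi(n_k)^{-2}$ relative to $I_{n_k-1}$, while the Gibbs weight loses only $\Phi(n_k)^{-2s}$; the mismatch is absorbed by the constant $-s\log B$ in the potential, via the defining identity $\mathsf{P}(T,-s_B\log|T'|) = s_B\log B$. Converting cylinder estimates to ball estimates requires careful bounded-distortion control across the two distinct scales (bounded-partial-quotient stretches versus the sparse large-$a_{n_k}$ levels), after which continuity of the pressure in $s$ permits the limit $s \to s_B^-$ to be taken.
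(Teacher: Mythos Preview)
The paper does not prove this theorem; it is quoted from Wang--Wu \cite{WaWu08} as background, so there is no in-paper argument to compare against. Your sketch is broadly the Wang--Wu strategy (natural cover for the upper bound, Cantor set plus mass distribution for the lower bound), and it is also the template the present paper itself adapts in Section~\ref{cantor} for the auxiliary set $E(A_1,A_2)$.

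One correction to your case (i): since $\log B$ is defined as a \emph{liminf}, $B=1$ only guarantees $\Phi(n)\le(1+\epsilon)^n$ along a subsequence, not eventually. The containment $\EE_1((1+\epsilon)^n)\subseteq\EE_1(\Phi)$ therefore need not hold as stated: an $x$ with $a_n(x)\ge(1+\epsilon)^n$ infinitely often might achieve this only at indices where $\Phi(n)$ is large. The fix is to build the Cantor subset along the specific subsequence $\{n_j\}$ where $\Phi(n_j)\le(1+\epsilon)^{n_j}$, forcing $a_{n_j}(x)\ge(1+\epsilon)^{n_j}\ge\Phi(n_j)$ at those indices; this is exactly the device the paper uses in Section~\ref{FPhi1Phi2} when passing from the model case $\FF_{B_1,B_2}$ to general $\FF(\Phi_1,\Phi_2)$.
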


Recently, Kleinbock-Wadleigh \cite{KleinbockWadleigh} observed that the growth of the product of consecutive partial quotients gives a characterisation for a real number to be $\Phi$-Dirichlet improvable or not. In fact, in \cite{KleinbockWadleigh} the Lebesgue measure of the set of $\Phi$-Dirichlet non-improvable numbers
\begin{equation*}
\EE_2(\Phi):=\left\{x\in [0, 1): a_n(x)a_{n+1}(x)\geq \Phi(n) \ {\rm for \ infinitely \ many} \ n\in \N\right\}
\end{equation*}
was established. Soon after that the Hausdorff dimension of this set (in a generalised form) was established in \cite{HuWuXu, HKWW}.

\begin{theorem}[{\protect\cite[Huang-Wu-Xu]{HuWuXu}}]
\label{HWXthm}
Let  $\Phi :\mathbb{N}\rightarrow \mathbb [1, \infty)$.  Suppose $$\log B=\liminf\limits_{n\rightarrow \infty }\frac{\log
\Phi (n)}{n} \ { and}\ \log b=\liminf\limits_{n\rightarrow \infty }\frac{\log
\log \Phi (n)}{n}.$$ 

\begin{itemize}
\item [\rm(i)] When $B=1$, $\dim_{H} \mathcal{E}_{2}(\Phi)=1.$

\item[\rm{(ii)}] When $B=\infty$, $\dim_{H} \mathcal{E}
_{2}(\Phi)=1/(1+b).$

\item[\rm{(iii)}] When $1<B<\infty$, $\dim_{H} \mathcal{E}
_{2}(\Phi)=s_{0}:=\inf \{s\geq 0 :\mathsf{P}(T, -s^2\log B-s\log |T^{\prime
}|))\le 0\}.$ 
\end{itemize}
\end{theorem}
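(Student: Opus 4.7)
The argument naturally divides into three cases. Case (i), $B=1$, is immediate: since $a_n(x)\geq \Phi(n)$ implies $a_n(x)a_{n+1}(x)\geq \Phi(n)$, one has $\EE_2(\Phi)\supseteq \EE_1(\Phi)$, so Theorem \ref{WaWu}(i) forces $\dim_H \EE_2(\Phi)=1$, and the trivial upper bound $\dim_H \EE_2(\Phi)\leq 1$ completes this case.

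For the upper bound in cases (ii) and (iii), the plan is to cover $\EE_2(\Phi)$ by the cylinders of order $n+1$ on which $a_n a_{n+1}\geq \Phi(n)$, for each large $n$. The diameter of $[a_1,\ldots,a_{n+1}]$ is comparable to $1/(a_{n+1}q_n)^2$, where $q_n=q_n(a_1,\ldots,a_n)$ is the usual continuant. Summing the $s$-th power over $a_{n+1}\geq \Phi(n)/a_n$ produces (for $s>1/2$) a factor of order $q_n^{-2s}(a_n/\Phi(n))^{2s-1}$; summing over $(a_1,\ldots,a_n)$ yields a series which, after pulling out $\Phi(n)^{-(2s-1)}\asymp B^{-n(2s-1)}$, can be recognized as a Birkhoff sum. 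Optimizing over how the constraint $a_na_{n+1}\geq \Phi(n)$ is split between the two coordinates converts the convergence threshold into the pressure condition $\mathsf{P}(T,-s^2\log B-s\log|T'|)\leq 0$, giving $\dim_H \EE_2(\Phi)\leq s_0$. Case (ii) is similar, with the counting simpler because $B=\infty$ collapses the pressure into a direct rate estimate producing $1/(1+b)$.

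For the lower bound, the engine is the mass distribution principle applied to a Cantor-like subset of $\EE_2(\Phi)$. Choose a sufficiently sparse subsequence $\{n_k\}\subset\N$ and, for each $k$, a pair of finite intervals $I_k,I'_k\subset\N$ with $\min I_k\cdot \min I'_k\geq \Phi(n_k)$; let
\begin{equation*}
E=\{x\in[0,1): a_{n_k}(x)\in I_k,\ a_{n_k+1}(x)\in I'_k\ \forall k;\ a_j(x)\in\{1,\ldots,M\}\ \text{otherwise}\}\subset\EE_2(\Phi).
\end{equation*}
Define a probability measure $\mu$ on $E$ by assigning weights of two different types: on the free positions use a Bernoulli-type distribution on $\{1,\ldots,M\}$ whose parameter tracks the equilibrium state of the potential appearing in the pressure, and on the two constrained positions use carefully weighted uniform measures on $I_k$ and $I'_k$. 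Verifying the Hölder estimate $\mu(B(x,r))\lesssim r^s$ then forces $\dim_H E\geq s$, and $s$ is tuned to $s_0$ (respectively $1/(1+b)$) by letting $M\to\infty$ and choosing $\{n_k\}$, $I_k$, $I'_k$ to saturate the pressure equation.

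The principal obstacle is the lower bound in case (iii): verifying the Hölder bound on $\mu(B(x,r))$ for \emph{arbitrary} balls, not merely for the basic cylinders of the construction. The standard route is to bound the number of level-$n$ cylinders a ball of radius $r$ meets via classical gap estimates between adjacent cylinders, and then to balance this count against the two independent weightings on $I_k$ and $I'_k$ so that the resulting local exponent of $\mu$ equals $s_0$. The presence of $s^2\log B$ (rather than $s\log B$ as in Theorem \ref{WaWu}) in the pressure is the precise analytic manifestation of this twofold freedom in $(a_{n_k},a_{n_k+1})$, and it is exactly what makes the measure-theoretic bookkeeping between the two types of probability weights delicate.
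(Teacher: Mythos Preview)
This theorem is not proved in the present paper. It is quoted from Huang--Wu--Xu \cite{HuWuXu} as background (Theorem~\ref{HWXthm}) and used as an input to the new results; the paper contains no proof of its own for this statement, so there is no ``paper's own proof'' to compare your proposal against.

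That said, your sketch is broadly in the spirit of the Huang--Wu--Xu argument and of the techniques the present paper \emph{does} develop for its new theorems (Sections~\ref{cantor}--\ref{specificsec}). One point worth sharpening: in your upper-bound outline for case~(iii) you write ``optimizing over how the constraint $a_na_{n+1}\geq\Phi(n)$ is split between the two coordinates'' to explain the appearance of $s^2\log B$ in the pressure. This is the right intuition but is not how the upper bound is actually obtained; the covering argument does not involve any optimization---one simply covers by the sets $J_n=\bigcup_{a_{n+1}\geq\Phi(n)/a_n}I_{n+1}$, sums, and the exponent $s_0$ falls out directly from the resulting pressure equation. The optimal split $a_{n_k}\asymp B^{s_0 n_k}$, $a_{n_k+1}\asymp B^{(1-s_0)n_k}$ enters only in the \emph{lower} bound, as the choice of constrained intervals $I_k,I_k'$ in the Cantor construction (compare Section~\ref{specificsec}, case $B_1^{s_0}\le B_2$, where exactly this choice is made). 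Your description of the lower bound via mass distribution on a Cantor set with two types of weights is accurate and matches the machinery of Section~\ref{cantor}.
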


Note that the set $\mathcal{E}_{1}(\Phi)$ is properly contained in $\mathcal{E}_{2}(\Phi)$ raising a natural question of the size of the set  
\begin{equation*}\label{pqc}
 \mathcal{F}(\Phi ):=~\mathcal{E}_{2}(\Phi )\setminus \mathcal{E}_{1}(\Phi )=\left\{ x\in [
0,1):
\begin{array}{r}
a_{n+1}(x)a_{n}(x)\geq \Phi (n)\text{ for infinitely many }n\in 
\mathbb{N}
\text{ and} \\ 
a_{n+1}(x)<\Phi (n)\text{ for all sufficiently large }n\in 
\mathbb{N}
\end{array}
\right\}.  
\end{equation*} 
 The Hausdorff dimension of $ \mathcal{F}(\Phi )$ was established in \cite{BBH2}.
\begin{theorem}[{\protect\cite[Bakhtawar-Bos-Hussain]{BBH2}}] \label{BBHthm} Let  $\Phi :\mathbb{N}\rightarrow \mathbb [1, \infty)$.  Suppose $$\log B=\liminf\limits_{n\rightarrow \infty }\frac{\log
\Phi (n)}{n} \ {\rm and} \ \log b=\liminf\limits_{n\rightarrow \infty }\frac{\log
\log \Phi (n)}{n}.$$ Then
\begin{equation*}
\dim _{H}\mathcal{F}(\Phi )=\left\{ 
\begin{array}{ll}
s_0 & 
\mathrm{if}\ \ 1<B< \infty; \\ [3ex] \frac{1}{1+b} & 
\mathrm{if}\ \ B=\infty,
\end{array}
\right.  
\end{equation*}
where $s_0$ is from Theorem \ref{HWXthm}.\end{theorem}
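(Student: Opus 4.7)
The upper bound is immediate from the inclusion $\mathcal{F}(\Phi) \subset \mathcal{E}_2(\Phi)$ and Theorem \ref{HWXthm}, which gives $\dim_H \mathcal{F}(\Phi) \leq s_0$ when $1 < B < \infty$ and $\dim_H \mathcal{F}(\Phi) \leq 1/(1+b)$ when $B = \infty$. The work is therefore in establishing a matching lower bound, which I would do by constructing a Cantor-like subset of $\mathcal{F}(\Phi)$ and applying the classical mass distribution principle, following the strategy used in the proofs of Theorems \ref{WaWu} and \ref{HWXthm}.

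I would fix a large integer $M$ and a rapidly growing subsequence $\{n_k\}\subset\mathbb{N}$. At all indices $n \notin \{n_k\}_k$ I force $a_n(x) \in \{1,\dots, M\}$, which automatically yields $a_{n+1}(x) \leq M < \Phi(n)$ for all large enough such $n$. At each critical index $n_k$ I prescribe $a_{n_k}(x)$ in an integer interval just below its cap $\Phi(n_k-1)$ (the restriction inherited from the defining condition of $\mathcal{F}$ at the previous index), and then choose $a_{n_k+1}(x)$ in a range with lower endpoint $\lceil \Phi(n_k)/a_{n_k}(x) \rceil$ while keeping $a_{n_k+1}(x)<\Phi(n_k)$. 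This enforces the product condition at every $n_k$ and the bounded condition everywhere, placing the resulting Cantor set $K$ inside $\mathcal{F}(\Phi)$.

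Next I would place a Bernoulli-type probability measure $\mu$ on $K$ by distributing mass uniformly (in suitable ratios between critical and non-critical levels) among admissible continuations, and use the standard estimates $|J_n| \asymp q_n^{-2}$ together with the recursion $q_{n+1} \leq (a_{n+1}+1)q_n$ for fundamental cylinders $J_n$ to compare $\mu(J_n)$ with $|J_n|^s$. Optimising over $M$, the Bowen-type equation $\mathsf{P}(T, -s^2 \log B - s \log|T'|)=0$ delivers the threshold $s_0$ in the case $1<B<\infty$, while in the case $B=\infty$ the bounded-digit blocks contribute negligibly to the measure-versus-length comparison and the dimension comes out to $1/(1+b)$ via the same Liouville-type counting as in the proof of Theorem \ref{WaWu}(ii).

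The main obstacle is the tension between the product inequality $a_{n_k}a_{n_k+1}\geq \Phi(n_k)$ and the simultaneous caps $a_{n_k}<\Phi(n_k-1)$ and $a_{n_k+1}<\Phi(n_k)$. When $1<B<\infty$, choosing $M>B$ leaves enough slack so that the straightforward split-the-product construction goes through; in the super-exponential regime, however, the ratio $\Phi(n_k)/\Phi(n_k-1)$ is unbounded, and a $k$-dependent allocation of the product between $a_{n_k}$ and $a_{n_k+1}$ is forced. Verifying that the resulting measure still satisfies the uniform local scaling bound needed by the mass distribution principle, and that the lower bound one obtains still matches $1/(1+b)$, is the delicate technical step.
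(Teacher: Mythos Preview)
Your upper bound via $\mathcal{F}(\Phi)\subset\mathcal{E}_2(\Phi)$ is correct. The gap is in the lower bound when $1<B<\infty$. Placing $a_{n_k}$ ``just below its cap $\Phi(n_k-1)$'' forces $a_{n_k}\asymp B^{n_k}$, so whatever range you then allow for $a_{n_k+1}$, your Cantor set $K$ is contained in $\{x:a_n(x)\ge cB^{n}\text{ for infinitely many }n\}$, which by Theorem~\ref{WaWu} has Hausdorff dimension $s_B$, the root of $\mathsf{P}(T,-s\log B-s\log|T'|)=0$. Since $s_0$ solves $\mathsf{P}(T,-s^{2}\log B-s\log|T'|)=0$ and $s_0<1$ for $B>1$, we have $s_0^{2}<s_0$ and hence $s_B<s_0$ strictly. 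Thus your construction can never reach $s_0$, regardless of how the mass is distributed afterwards; the case you labelled as having ``enough slack'' is precisely where the argument fails.

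The missing idea is that the allocation of the product between $a_{n_k}$ and $a_{n_k+1}$ must itself be optimised, and the optimum is \emph{not} at the cap. The present paper does not prove Theorem~\ref{BBHthm} directly (it is quoted from \cite{BBH2}), but its proof of the generalisation in Theorems~\ref{dim E(A_1,A_2)} and~\ref{specific} exhibits the correct split: one takes $a_{n_k}\in[B^{s_0 n_k},\,2B^{s_0 n_k})$ and $a_{n_k+1}\in[B^{(1-s_0)n_k},\,2B^{(1-s_0)n_k})$. Both caps $a_{n_k}<\Phi(n_k-1)$ and $a_{n_k+1}<\Phi(n_k)$ are satisfied with room to spare (because $s_0<1$ and $1-s_0<1$), and the mass-distribution calculation now gives exactly $s_0$, this being the unique exponent at which the two natural covers of the Cantor set --- by basic cylinders of order $n_k-1$ and of order $n_k$ --- have the same $s$-volume. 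In the notation of Theorem~\ref{dim E(A_1,A_2)}, your split corresponds to $(A_1,A_2)\approx(B,1)$ with dimension $\min\{s_B,g_{B,B}\}=s_B$, whereas the sharp choice is $(A_1,A_2)=(B^{s_0},B^{1-s_0})$ with dimension $\min\{s_{B^{s_0}},g_{B,B^{s_0}}\}=s_0$. For $B=\infty$ the split is far less sensitive, so the case you flagged as delicate is in fact not where the construction breaks.
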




Note that the Hausdorff dimension proved in Theorem \ref{BBHthm} is the same as in Theorem \ref{HWXthm}, that is, $\dim_H \mathcal{F}(\Phi )=\dim_H \EE_2(\Phi )$. The main reason that the subset $\mathcal{F}(\Phi )$ has the same dimension as the superset $\EE_{2}(\Phi)$, is that the set $\EE_{2}(\Phi)$ is much larger than $\EE_{1}(\Phi)$.  However, if we consider different approximation functions then the problem becomes more intricate and challenging. The complications arises in constructing a suitable Cantor type subset supporting a probability measure. To overcome this difficulty we introduce two types of probability measures --  an idea that we believe can lend itself in resolving problems of similar nature. Indeed our results below highlight situations when $\mathcal{F}(\Phi )$ and $\EE_{2}(\Phi)$ have different dimensions for different $\Phi$. To be precise,  define the set
\begin{equation*}
\FF(\Phi_1,\Phi_2) \defeq \EE_2(\Phi_1) \backslash \EE_1(\Phi_2)=\left\{x\in[0,1):
\begin{split} 
 a_n(x)a_{n+1}(x) & \geq\Phi_1(n) \text{\,\, for infinitely many } n\in\N \\
 a_{n+1}(x) & <\Phi_2(n)  \text{\,\, for all sufficiently large } n\in\N
\end{split}
\right\}.
\end{equation*}
We fully characterise the Hausdorff dimension of $\FF(\Phi_1,\Phi_2)$ as well as prove some more results regarding general sets, in some way connected to $\FF(\Phi_1,\Phi_2)$, which are also of their own independent interest. 
 
 It is worth noting that when $\Phi$ is a function of the denominator of the $n^{\rm th}$ convergent $q_n$, the Hausdorff dimension (and measure) study is carried out in a sequence of papers
 \cite{BBH1, BHS, BBJ}.  In particular, these papers give insight into comparing the uniform Diophantine approximation (Dirichlet non-improvable numbers) to the asymptotic Diophantine approximation (Jarn\'ik set).  
\subsection{Main results} 
The first result of this paper gives a full description of the Hausdorff dimension of~$\FF(\Phi_1,\Phi_2)$. 
\begin{theorem}\label{general}
Let $\Phi_1$ and $\Phi_2$ be positive functions such that
\begin{align*}
\log B_1 = \liminf\limits_{n\to\infty} \frac{\log \Phi_1(n)}{n} & \text{ \,\,\,\,\,\, and \,\,\,\,\,} \log b_1 = \liminf\limits_{n\to\infty} \frac{\log\log \Phi_1(n)}{n}.\\
\log B_2 = \lim\limits_{n\to\infty} \frac{\log \Phi_2(n)}{n}  &\text{ \,\,\,\,\,\, and \,\,\,\,\,} \log b_2 = \lim\limits_{n\to\infty} \frac{\log\log \Phi_2(n)}{n}.
\end{align*}

Then

%
%

\begin{equation*}
\dim_H \FF(\Phi_1,\Phi_2) =\left\{ 
\begin{array}{ll}
s_0 & {\rm if}\ \ B_1^{s_0} \le B_2 \ {\rm and} \ B_1 \ {\rm is \ finite}; \\ [3ex] 
g_{B_1,B_2} & {\rm if} \ \  B_1^{s_0} \ge B_2 > B_1^{1/2};\\ [3ex] 
\emptyset& {\rm if} \ \  B_1^{1/2}> B_2 \ {\rm and } \ B_2 \ {\rm is \ finite};
\\ [3ex] 
\frac1{1+b_1} & {\rm if} \ \  B_1=B_2=\infty, \ 1\le b_1<b_2, \  {\rm and} \ b_1 \ {\rm is \ finite};
\\ [3ex] 
\emptyset & {\rm if} \ \  B_1=B_2=\infty, \ b_1>b_2\geq 1, \  {\rm and} \ b_2 \ {\rm is \ finite};
\\ [3ex] 
 \emptyset  \text{\,\,  or\,\, }   0 & {\rm if} \ \  B_1=B_2=\infty, \ b_1=b_2=\infty,

\end{array}
\right.
\end{equation*}
where $s_0$ is from Theorem \ref{WaWu} and 
$$g_{B_1,B_2} \defeq \inf \{ s\geq0:P(T,-s\log |T'| -s\log B_1 +(1-s)\log B_2) \leq 0\}.$$
\end{theorem}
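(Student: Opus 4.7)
The plan is to treat each of the six cases of Theorem~\ref{general} via the standard three-step program from \cite{WaWu08, HuWuXu, BBH2}: first reduce $\Phi_1, \Phi_2$ to the model functions $\Phi_i(n)=B_i^n$ (respectively $e^{b_i^n}$ in the super-exponential regime) so that only $B_i, b_i$ enter the final computation; establish the upper bound via a dyadic cylinder cover; and then construct a matching Cantor subset supporting a probability measure to which the mass distribution principle is applied.

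For the upper bound, split the infinitely-often condition over $n$ and decompose dyadically $a_n\asymp 2^i$, $a_{n+1}\asymp 2^j$ subject to the constraints $i+j\gtrsim n\log_2 B_1$, $i<(n-1)\log_2 B_2$, and $j<n\log_2 B_2$. Since the cylinder $I_{n+1}(a_1,\ldots,a_{n+1})$ has diameter $\asymp 1/(q_n q_{n+1})\asymp 1/(a_n^2 a_{n+1}^2 q_{n-1}^2)$, the $s$-content of the resulting cover is controlled by an exponential rate in $n$ combining $\mathsf{P}(T,-s\log|T'|)$ with contributions $B_1^{-sn}$ and $B_2^{(1-s)n}$ from the dyadic enumeration. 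Setting this rate to zero reproduces $g_{B_1,B_2}$ in Case~2 and collapses to the equation defining $s_0$ in Case~1, because the ceiling $B_2\geq B_1^{s_0}$ is so large that the $B_2^{(1-s)n}$ term is absorbed at $s=s_0$. The emptiness assertions in Cases~3 and~5 I would establish directly: for all sufficiently large $n$ one has $a_n<\Phi_2(n-1)$ and $a_{n+1}<\Phi_2(n)$, whence $a_n a_{n+1}<\Phi_2(n-1)\Phi_2(n)$, and when $B_2<B_1^{1/2}$ (respectively $b_2<b_1$) this upper bound is eventually strictly smaller than $\Phi_1(n)$, contradicting the infinitely-often product condition.

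The lower bound in Case~2 is the crux of the argument and the main obstacle. Along a sparse sequence $(n_k)$, build a Cantor set by restricting the partial quotients: at levels $n\notin\{n_k, n_k+1\}$ take $a_n\in\{1,\ldots,M\}$ with $M$ large, tuning the entropy/Lyapunov balance dictated by the pressure equation; at each prescribed pair take $a_{n_k+1}$ in a dyadic band just below $\Phi_2(n_k)\approx B_2^{n_k}$ and $a_{n_k}$ in a band of size $\asymp B_1^{n_k}/B_2^{n_k}$ meeting $a_{n_k}a_{n_k+1}\geq\Phi_1(n_k)$ with minimal slack. The novelty flagged in the abstract is to endow this Cantor set with a probability measure $\mu$ of \emph{two different types}: on the generic levels $\mu$ is a Gibbs-type state for the potential $-s\log|T'|$, while on each prescribed pair the mass is distributed proportionally to $a_{n_k}^{-s} a_{n_k+1}^{-s}$ together with a factor absorbing the $B_2$-many admissible choices of $a_{n_k+1}$. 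This decoupling of the two free parameters lets $B_1$ enter the Hölder exponent with weight $-s$ and $B_2$ with weight $+(1-s)$, which is exactly what produces $g_{B_1,B_2}$. Verifying $\mu(B(x,r))\lesssim r^{g_{B_1,B_2}-\varepsilon}$ on arbitrary balls (not only on cylinders) is the hardest step and will require a careful case split on how $r$ compares to the scales $|I_{n_k}|, |I_{n_k+1}|, |I_{n_k+2}|$. The super-exponential regime with $b_1<b_2$ (Case~4) follows the same scheme using $|I_n|\asymp q_n^{-2}\asymp\prod_i a_i^{-2}$ as in \cite{Luczak, BBH2}, the $\Phi_2$-ceiling being asymptotically inactive so that the bound $1/(1+b_1)$ transfers from Theorem~\ref{BBHthm}; Case~6 is settled by an additional dichotomy distinguishing genuine emptiness from a nonempty but zero-dimensional set, according to whether the admissible region for $(a_n,a_{n+1})$ is eventually empty or merely too thin to carry a positive-dimensional measure.
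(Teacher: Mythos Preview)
Your outline for Cases 3, 5, and 6 is correct and matches the paper, and the dyadic covering for the upper bound is heavier machinery than needed---the paper simply sums $|J_n|^s$ over $1\le a_n\le B_2^n$---but would work.

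There is, however, a genuine gap in the Case~2 lower bound. You place $a_{n_k+1}$ near the ceiling $B_2^{n_k}$ and force $a_{n_k}\asymp (B_1/B_2)^{n_k}$; the paper does the opposite, taking $a_{n_k}\asymp B_2^{n_k}$ and $a_{n_k+1}\asymp (B_1/B_2)^{n_k}$. This is not a harmless symmetry. In the notation of Theorem~\ref{dim E(A_1,A_2)}, your Cantor set is $E(B_1/B_2,\,B_2)$, whose dimension is $\min\{s_{B_1/B_2},\,g_{B_1,\,B_1/B_2}\}$, whereas the paper's $E(B_2,\,B_1/B_2)$ has dimension $\min\{s_{B_2},\,g_{B_1,B_2}\}=g_{B_1,B_2}$ throughout the regime $B_1^{1/2}<B_2<B_1^{s_0}$. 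Since $g_{B_1,\cdot}$ is strictly increasing in its second argument and $B_1/B_2<B_2$ here, your subset has dimension at most $g_{B_1,\,B_1/B_2}<g_{B_1,B_2}$, so no measure supported on it can deliver the sharp H\"older exponent via the mass distribution principle. Intuitively, the optimal cover's $s$-volume $\sum_{a_n\le B_2^n}(B_1^n a_n q_{n-1}^2)^{-s}\asymp B_2^{n(1-s)}(B_1^n q_{n-1}^2)^{-s}$ is dominated by $a_n$ near $B_2^n$; the matching Cantor set must therefore push $a_{n_k}$, not $a_{n_k+1}$, to that ceiling.

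You also misread the ``two different types of probability measures''. Every Cantor measure in this setting already has different rules at generic versus prescribed levels; that is not the novelty. The paper instead defines two \emph{separate} measures $\mu_1,\mu_2$ on the same Cantor set $E(A_1,A_2)$, built from the pre-dimensional numbers $s$ and $g$ respectively, and uses whichever yields the better H\"older exponent---this is what produces the $\min$ in Theorem~\ref{dim E(A_1,A_2)}. In particular, Case~1 requires $\mu_1$ on the set with $A_1=B_1^{s_0}$ (your write-up only addresses the Case~1 upper bound), while Case~2 requires $\mu_2$ on the set with $A_1=B_2$, a distinction your single-measure description elides. For Case~4 the paper does not transfer from Theorem~\ref{BBHthm} but applies Lemma~\ref{helplemma} directly to $\{s_n\le a_{n+1}<2s_n\}$ with $s_n=\tfrac12 e^{(b_1+\epsilon)^n}$.
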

The reason behind the slight variation in the definitions of $B_2$ and $b_2$ compared to $B_1$ and $b_1$ respectively,  is in the nature of the conditions of our set. To get the proper classification, we need to bound $\Phi_2(n)$ from both sides for all $n$, while bounding $\Phi_1(n)$ from above for infinitely many $n$ is sufficient. Reader might also notice that this theorem does not cover two cases, namely $B_1 = B_2^2<\infty$ and $b_1=b_2<\infty$. Their omission qualifies them to be highlighted as standalone propositions.

\begin{proposition}\label{Prop1}
Denote
$$
E_1 =\left\{x\in[0,1):
\begin{split} 
 a_n(x)a_{n+1}(x) & \geq B_2^{2n} \text{\,\, for infinitely many } n\in\N \\
 a_{n+1}(x) & < B_2^n  \text{\,\, for all sufficiently large } n\in\N
\end{split}
\right\},
$$
and 
$$
E_2 =\left\{x\in[0,1):
\begin{split} 
 a_n(x)a_{n+1}(x) & \geq B_2^{2n-1} \text{\,\, for infinitely many } n\in\N \\
 a_{n+1}(x) & <3 B_2^{n+1}  \text{\,\, for all sufficiently large } n\in\N
\end{split}
\right\}.
$$
Then $B_1=B_2^2<\infty$ for both of these sets, but
$$
E_1 = \emptyset, \,\,\, \hdim E_2 = g_{B_1,B_2}.
$$
\end{proposition}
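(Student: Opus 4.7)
The plan is to treat the two claims separately. For the emptiness of $E_1$, I would derive a direct contradiction from the two defining conditions. Assume $x \in E_1$; the second condition supplies an $N$ with $a_{n+1}(x) < B_2^n$ for all $n \geq N$, which re-indexed says $a_m(x) < B_2^{m-1}$ for all $m \geq N+1$. Combining the two consecutive bounds gives
\[
a_n(x)\, a_{n+1}(x) < B_2^{n-1} \cdot B_2^n = B_2^{2n-1} < B_2^{2n}
\]
for every sufficiently large $n$, contradicting $a_n(x)\, a_{n+1}(x) \geq B_2^{2n}$ for infinitely many $n$. (The case $B_2 = 1$ is even more immediate, since then $a_{n+1}(x) < 1$ is impossible.) This same calculation also explains why, in the definition of $E_2$, the weakening from $B_2^{2n}$ to $B_2^{2n-1}$ on the product side and from $B_2^n$ to $3B_2^{n+1}$ on the single side is precisely what is needed to keep the set non-empty.

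For $\hdim E_2 = g_{B_1,B_2}$, this is the boundary case $B_1 = B_2^2$ that is conspicuously missing from Theorem \ref{general}, sitting between the regime $B_1^{s_0} \geq B_2 > B_1^{1/2}$ (dimension $g_{B_1,B_2}$) and $B_1^{1/2} > B_2$ (empty). My plan is to adapt the scheme of the proof of Theorem \ref{general} in the second of these cases. For the upper bound $\hdim E_2 \leq g_{B_1,B_2}$, I would cover $E_2$, for each large $n$ where the product condition holds, by level-$(n+1)$ cylinders indexed by admissible pairs $(a_n, a_{n+1})$ with $a_n a_{n+1} \geq B_2^{2n-1}$, $a_n < 3B_2^n$ and $a_{n+1} < 3B_2^{n+1}$ (the latter two bounds inherited from the second defining condition of $E_2$, with a one-step shift in the index for $a_n$), and count these pairs against the standard cylinder-diameter estimate. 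Since neither the prefactor $3$ nor the $-1$ in the exponent influences the limits defining $B_1$ and $B_2$, the resulting $s$-series converges exactly when $s > g_{B_1,B_2}$.

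For the lower bound $\hdim E_2 \geq g_{B_1,B_2}$, I would construct a Cantor-type subset $F \subseteq E_2$ along a sufficiently rapidly increasing sequence of indices $n_k$: at each $n_k$ allow the pair $(a_{n_k}, a_{n_k+1})$ to range over admissible pairs with $a_{n_k}\, a_{n_k+1} \geq B_2^{2n_k - 1}$ and both factors strictly below $3B_2^{n_k+1}$, and at all remaining indices restrict $a_n$ to a carefully chosen finite alphabet. I would then support on $F$ a Bernoulli-type measure built via the two-types-of-probability-measure technique advertised in the abstract, using one weighting at the \emph{product} indices $n_k$ and $n_k+1$ and a different one at the remaining indices, each tuned so that the local measure-to-diameter ratio balances to exponent $g_{B_1,B_2}$; the classical mass distribution principle then delivers the lower bound.

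The hard part will be the lower bound construction. The constraint $a_{n+1}(x) < 3B_2^{n+1}$ must be enforced for \emph{every} sufficiently large $n$, including at the chosen product indices $n_k$, which forces admissible pairs at $n_k$ onto the boundary of the feasibility region and leaves essentially no slack. Reconciling this global ceiling with the correct local dimension $g_{B_1,B_2}$ at the product indices is exactly what requires the two-measure construction: one measure accounts for the dimensional cost at the sparse product indices where cylinder lengths shrink rapidly, while the other fills in the remaining indices so as not to waste dimension. The emptiness of $E_1$ confirms that the slack constants $3$ and $-1$ in the definition of $E_2$ are sharp, so the construction cannot afford to be loose anywhere.
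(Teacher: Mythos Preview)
Your treatment of $E_1=\emptyset$ and of the upper bound $\hdim E_2\le g_{B_1,B_2}$ is correct and matches the paper: the emptiness is exactly the computation behind the case $B_1^{1/2}\ge B_2$ of Theorem~\ref{specific}, and the upper bound is the covering argument from \S\ref{subsection42}, where neither the prefactor $3$ nor the shift $-1$ in the exponent affects the pressure value.

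Where you diverge is the lower bound for $E_2$. You propose to rebuild the two-measure Cantor construction from scratch, and you frame the difficulty as the constraint $a_{n+1}<3B_2^{n+1}$ forcing admissible pairs ``onto the boundary of the feasibility region'' with ``essentially no slack''. This overstates the obstacle. The paper simply invokes Theorem~\ref{dim E(A_1,A_2)} with $A_1=A_2=B_2$: the set
\[
F=\bigl\{x:\ B_2^{n_k}\le a_{n_k}(x)<2B_2^{n_k},\ B_2^{n_k}\le a_{n_k+1}(x)<2B_2^{n_k}\ \text{for all }k;\ 1\le a_n(x)\le M\ \text{otherwise}\bigr\}
\]
sits inside $E_2$ with room to spare (since $a_{n_k}a_{n_k+1}\ge B_2^{2n_k}>B_2^{2n_k-1}$ and $2B_2^{n_k}<3B_2^{n_k+1}$), and Theorem~\ref{dim E(A_1,A_2)} gives $\hdim F=\min\{s_{B_2},g_{B_2^2,B_2}\}=g_{B_1,B_2}$, the minimum being attained at $g$ because $B_2=B_1^{1/2}<B_1^{s_0}$. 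So the two-measure machinery has already been packaged into Theorem~\ref{dim E(A_1,A_2)}, and the proposition follows in a line; there is no need to re-run the construction or to worry about boundary effects. Your plan would succeed, but it reproves a special case of a theorem already available.
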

\noindent
\begin{proposition}\label{Prop2}
Denote
$$
P_1 =\left\{x\in[0,1):
\begin{split} 
 a_n(x)a_{n+1}(x) & \geq e^{b_1^{n}} \text{\,\, for infinitely many } n\in\N \\
 a_{n+1}(x) & < e^{b_1^{n}}  \text{\,\, for all sufficiently large } n\in\N
\end{split}
\right\},
$$
and
$$
P_2 =\left\{x\in[0,1):
\begin{split} 
 a_n(x)a_{n+1}(x) & \geq e^{5b_1^{n}} \text{\,\, for infinitely many } n\in\N \\
 a_{n+1}(x) & <  e^{b_1^{n-1}}  \text{\,\, for all sufficiently large } n\in\N
\end{split}
\right\}.
$$
Then $B_1=B_2=\infty$, $b_1=b_2<\infty$ for both of these sets, but
$$
P_2 = \emptyset, \,\,\, \hdim P_1 = \frac1{1+b_1}.
$$
\end{proposition}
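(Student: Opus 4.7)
The plan is to treat the two claims separately: the dimension formula for $P_1$ will reduce to an already-proved theorem, while the emptiness of $P_2$ falls out of a direct comparison of the product lower bound against the single-partial-quotient upper bound.

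First I would observe that, with the choice $\Phi(n) \defeq e^{b_1^n}$, the defining conditions of $P_1$ coincide verbatim with those of the set $\FF(\Phi)$ appearing in Theorem \ref{BBHthm}. For this $\Phi$ one computes
\[
\log B = \liminf_{n\to\infty}\frac{\log\Phi(n)}{n} = \infty
\quad\text{and}\quad
\log b = \liminf_{n\to\infty}\frac{\log\log\Phi(n)}{n} = \log b_1,
\]
so the $B=\infty$ branch of Theorem \ref{BBHthm} yields $\hdim P_1 = 1/(1+b_1)$ at once, with no fresh Cantor construction required.

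For $P_2 = \emptyset$ I would argue by contradiction. If $x \in P_2$, the upper bound $a_{n+1}(x) < e^{b_1^{n-1}}$ valid for all large $n$ implies, after a shift of index, also $a_n(x) < e^{b_1^{n-2}}$ for all large $n$. Multiplying these,
\[
a_n(x)\,a_{n+1}(x) \,<\, e^{b_1^{n-2} + b_1^{n-1}} \,=\, e^{b_1^{n-2}(1+b_1)}
\]
eventually, whereas the infinitely-often lower bound $a_n(x)a_{n+1}(x) \geq e^{5b_1^n}$ would force $(1+b_1)/b_1^2 \geq 5$. For $b_1 \geq 1$ the left-hand side is at most $2$, giving the required contradiction.

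The main (and essentially only) obstacle is careful bookkeeping: one must confirm that no index shift or multiplicative constant spoils the literal identification $P_1 = \FF(e^{b_1^{\,\cdot}})$ in both the ``infinitely many'' and ``all sufficiently large'' clauses, and that the implicit assumption $b_1 > 1$ (forced by $\Phi_1(n) \to \infty$ at the prescribed double-exponential rate) covers the range in which the emptiness argument for $P_2$ is applied. Beyond this, no new machinery is needed.
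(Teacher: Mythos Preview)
Your proposal is correct and follows essentially the same approach as the paper: it too identifies $P_1$ with the set $\FF(\Phi)$ of Theorem~\ref{BBHthm} for $\Phi(n)=e^{b_1^n}$ and reads off the dimension from the $B=\infty$ case, and it proves $P_2=\emptyset$ by combining $a_n<e^{b_1^{n-2}}$ and $a_{n+1}<e^{b_1^{n-1}}$ with the lower bound $a_na_{n+1}\ge e^{5b_1^n}$ to reach the impossible inequality $5b_1^2\le 1+b_1$ for $b_1\ge 1$. The only cosmetic difference is that the paper phrases the contradiction as ``$5b_1^2<b_1+1$ forces $b_1<1$'' rather than bounding $(1+b_1)/b_1^2$ directly.
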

\medskip

The key ingredient in proving Theorem \ref{general} is to consider a supporting set.  Define
$$
E(A_1,A_2) \defeq \left\{ x\in[0,1): \, c_1 A_1^n\leq a_n(x) <2c_1A_1^n,  \, \,  c_2 A_2^n\leq a_{n+1}(x)<2  c_2 A_2^n,\,\, \text{for i.m. } n\in\N \right\}.
$$
Here and throughout, `i.m.' stands for `infinitely many'. Then we have
\begin{theorem}\label{dim E(A_1,A_2)}
For any $A_1>1$,
$$\dim_H E(A_1,A_2) = \min \left\{ s_{A_1} , g_{(A_1 A_2),A_1} \right \},$$
where $s_{A_1}$ is from Theorem \ref{WaWu} and $g_{(A_1 A_2),A_1}$ was defined in Theorem \ref{general}.
\end{theorem}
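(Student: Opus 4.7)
The plan is to establish matching upper and lower bounds, both equal to $\min\{s_{A_1},g_{(A_1A_2),A_1}\}$. Write $E(A_1,A_2)=\limsup_{n}E_n$, where $E_n=\{x:\,c_1A_1^n\le a_n(x)<2c_1A_1^n,\ c_2A_2^n\le a_{n+1}(x)<2c_2A_2^n\}$. For the upper bound I would use two different covers, one per candidate dimension. First, since $E(A_1,A_2)\subseteq \EE_1(\Phi)$ with $\Phi(n)=c_1A_1^n$, Theorem~\ref{WaWu} immediately yields $\dim_H E(A_1,A_2)\le s_{A_1}$. Second, to obtain $g_{(A_1A_2),A_1}$ I would use a cover that exploits the joint constraint on $a_{n+1}$: for each admissible $(a_1,\ldots,a_n)$, the union $J(a_1,\ldots,a_n)=\bigcup_{k\in[c_2A_2^n,\,2c_2A_2^n)}I_{n+1}(a_1,\ldots,a_n,k)$ is a subinterval of $I_n$ of length $\asymp(A_1^{2n}A_2^n q_{n-1}^2)^{-1}$, using $q_n\asymp A_1^n q_{n-1}$. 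Summing $|J|^s$ over the $\asymp A_1^n$ admissible values of $a_n$ and applying the thermodynamic identity $\sum_{(a_1,\ldots,a_{n-1})}q_{n-1}^{-2s}\asymp e^{(n-1)P(T,-s\log|T'|)}$ yields $\HH^s(E_n)\ll A_1^n(A_1^{2n}A_2^n)^{-s}e^{(n-1)P(T,-s\log|T'|)}$, which is summable in $n$ exactly when $P(T,-s\log|T'|)<(2s-1)\log A_1+s\log A_2$. Since $P(T,\phi+c)=P(T,\phi)+c$, this rearranges to $s>g_{(A_1A_2),A_1}$, and Borel--Cantelli completes the second upper bound.

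For the lower bound I would build a Cantor subset $\KK\subseteq E(A_1,A_2)$ by fixing a sparse increasing sequence of ``good'' indices $(n_k)$, prescribing $a_{n_k}(x)\in[c_1A_1^{n_k},2c_1A_1^{n_k})$ and $a_{n_k+1}(x)\in[c_2A_2^{n_k},2c_2A_2^{n_k})$ at each good level, and letting $a_n$ range over a finite alphabet $\{1,\ldots,M\}$ at all other positions, with $M$ calibrated so that the truncated-alphabet pressure approximates $P(T,-s\log|T'|)$ within $\varepsilon$. To distribute mass on $\KK$ I would use the two-type probability measure idea advertised in the abstract: when $s_{A_1}\le g_{(A_1A_2),A_1}$, the mass is spread roughly uniformly over the $\asymp A_1^{n_k}$ admissible values of $a_{n_k}$ (mimicking the extremal Cantor measure for $\EE_1(\Phi)$, with the joint choice of $a_{n_k+1}$ absorbed into a secondary refinement); when the reverse inequality holds, the measure instead distributes mass jointly over the pair $(a_{n_k},a_{n_k+1})$ to reflect the two-quotient pressure defining $g_{(A_1A_2),A_1}$. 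The mass distribution principle then delivers the lower bound once one verifies the local estimate $\mu(B)\ll\diam(B)^{s-\varepsilon}$ for $s=\min\{s_{A_1},g_{(A_1A_2),A_1}\}$.

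The main obstacle lies in the lower bound, specifically in bounding $\mu(B)$ for arbitrary small balls $B$. Balls whose radii fall between the scales $|I_{n_k+1}|$ and $|I_{n_{k+1}}|$ require the most delicate handling, since at such intermediate scales $\mu$ behaves like either the single-quotient or the joint measure depending on which of the two regimes is active; the two-measure construction is precisely what allows a single unified estimate to cover both cases. A subsidiary technical point is that the alphabet size $M$ and the growth rate of $(n_k)$ must be chosen compatibly, so that the truncated-alphabet pressure converges uniformly in $s$ and the contribution from the narrow good-position windows remains negligible compared to the bulk contribution from the free positions, yielding the target dimension in the limit $\varepsilon\to 0$.
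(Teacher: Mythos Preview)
Your plan matches the paper's proof closely: the same two covers for the upper bound (the paper carries out the first one explicitly via basic cylinders $J_{n-1}=\bigcup_{A_1^n\le a_n<2A_1^n}I_n$ rather than citing Wang--Wu, but that is cosmetic), and for the lower bound the same Cantor subset built on a sparse sequence $(n_k)$ with alphabet $\{1,\ldots,M\}$ at free positions, followed by the mass distribution principle with two candidate measures.

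One point is worth sharpening before implementation. In the paper the two measures $\mu_1,\mu_2$ do \emph{not} differ in how they handle the constrained positions $n_k,n_k+1$ --- both distribute mass uniformly there, multiplying by $A_1^{-n_k}$ and then $A_2^{-n_k}$. The distinction lies in the Gibbs-type weights at the \emph{free} positions: a block $w\in\{1,\ldots,M\}^N$ receives weight $q_N(w)^{-2s}A_1^{-sN}$ under $\mu_1$ (tuned to the pre-dimension $s\approx s_{A_1}$) but weight $A_1^{N}q_N(w)^{-2g}(A_1^2A_2)^{-gN}$ under $\mu_2$ (tuned to $g\approx g_{(A_1A_2),A_1}$). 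This is what makes the H\"older exponent $\min\{s,g\}$ survive the transition through levels $n_k-1,n_k,n_k+1$: with $\mu_1$ the estimate at $n_k-1$ is automatic but passing to $n_k$ requires $A_1\gtrsim(A_1A_2)^{\min\{s,g\}}$, while with $\mu_2$ the roles reverse (the step at $n_k-1$ needs $A_1\lesssim(A_1A_2)^{g}$). Your phrasing --- ``uniform over $a_{n_k}$'' versus ``jointly over the pair'' --- locates the distinction at the constrained positions; if you implement it that way, the cancellation needed at the transition levels will not appear. Make sure the $s$-versus-$g$ exponent is built into the free-position weights.
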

\medskip
There is one more supporting set we need to consider to calculate the Hausdorff dimension of $\FF(\Phi_1,\Phi_2)$. Define
\begin{equation*}
\FF_{B_1,B_2}=\left\{x\in[0,1):
\begin{split} 
 a_n(x)a_{n+1}(x) & \geq B_1^n \text{\,\, for infinitely many } n\in\N \\
 a_{n+1}(x) & < B_2^n  \text{\,\, for all sufficiently large } n\in\N
\end{split}
\right\}.
\end{equation*}

This set is a special case of $\FF(\Phi_1,\Phi_2)$ with $\Phi_i(n)=B_i^n,i=1,2$. However, dealing with this set is crucial for the general case. We prove the following theorem explicitly.
\begin{theorem}\label{specific}
For any $B_1,B_2>1$,
 \begin{itemize}
\item when $B_1^{s_0} \le B_2$,
$$ \dim_H \FF_{B_1,B_2} = s_{0};$$
\item when  $B_1^{s_0} \ge B_2 > B_1^{1/2}$,
$$\dim_H  \FF_{B_1,B_2} = g_{B_1,B_2};$$
\item when $B_1^{1/2}\ge B_2$,
$$ \FF_{B_1,B_2}=\emptyset.$$

\end{itemize}
\end{theorem}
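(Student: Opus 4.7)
The plan is to establish three parts — emptiness in one case, the dimension $s_0$ in another, and $g_{B_1,B_2}$ in the third — via matching upper and lower bounds. The emptiness claim is elementary: if $B_2 < B_1^{1/2}$, then any $x \in \FF_{B_1,B_2}$ eventually satisfies $a_n(x) < B_2^{n-1}$ and $a_{n+1}(x) < B_2^n$, forcing $a_n(x) a_{n+1}(x) < B_2^{2n-1} < B_1^n$ for all large $n$, contradicting the product condition.

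The upper bound $\dim_H \FF_{B_1,B_2} \le s_0$ is immediate from $\FF_{B_1,B_2} \subseteq \EE_2(\Phi_1)$ with $\Phi_1(n)=B_1^n$, together with Theorem \ref{HWXthm}; this is sharp in Case 1. For Case 2 we sharpen to $g_{B_1,B_2}$ through a covering argument that exploits the $B_2$-constraint globally. Cover $\FF_{B_1,B_2}$ by
\[
E_n = \{x : a_n a_{n+1} \ge B_1^n,\ a_{k+1} < B_2^k \text{ for all } N \le k \le n\},\quad n \ge N,
\]
and decompose $E_n$ dyadically by scales $u, v$ of $a_n, a_{n+1}$. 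The combined constraints $u \le B_2^{n-1}$, $v \le B_2^n$, $uv \ge B_1^n$ restrict the admissible $(u,v)$ to a region with $v \ge B_1^n/B_2^{n-1} \asymp (B_1/B_2)^n$, a range that is nonempty precisely when $B_2 > B_1^{1/2}$. Using the coarse intervals $\{x \in [a_1,\ldots,a_n] : a_{n+1} \in [v, 2v)\}$ of length $\asymp (vu^2 q_{n-1}^2)^{-1}$, the $s$-cost per pair is $v^{-s} u^{1-2s} q_{n-1}^{-2s}$. For $s > 1/2$, summing over admissible dyadic $(u,v)$ evaluates to $\asymp B_1^{-ns} B_2^{n(1-s)}$, while the sum over cylinders $(a_1,\ldots,a_{n-1})$ grows like $\exp(n\,P(T,-s\log|T'|))$. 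Convergence of the total cost requires $P(T,-s\log|T'|) - s\log B_1 + (1-s)\log B_2 < 0$, i.e.\ $s > g_{B_1,B_2}$, giving $\dim_H \FF_{B_1,B_2} \le g_{B_1,B_2}$.

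For the lower bounds, apply Theorem \ref{dim E(A_1,A_2)} with $A_1, A_2 < B_2$ satisfying $A_1 A_2 = B_1$; the Cantor subset realizing $\dim_H E(A_1,A_2) = \min\{s_{A_1}, g_{B_1,A_1}\}$ can be built so that partial quotients at non-active indices stay bounded by a constant, automatically enforcing $a_{k+1} < B_2^k$ for all large $k$, and a choice of $c_1, c_2$ with $c_1 c_2 \ge 1$ ensures the product condition at the active indices. In Case 2 take $A_1 = B_2 - \epsilon$, $A_2 = B_1/(B_2-\epsilon)$, so $A_2 < B_2$ using $B_1 < B_2^2$; a comparison of the two pressure equations shows that $B_2 < B_1^{s_0}$ implies $g_{B_1,B_2} < s_{B_2}$, so the dimension of the subset equals $g_{B_1, B_2-\epsilon}$, and sending $\epsilon \to 0$ yields $\dim_H \FF_{B_1,B_2} \ge g_{B_1,B_2}$. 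In Case 1 take $A_1 = B_1^{s_0}-\epsilon$; the fact that $P(T,-s\log|T'|) = +\infty$ for $s \le 1/2$ forces $s_0 > 1/2$, whence $A_2 \to B_1^{1-s_0} < B_1^{s_0} \le B_2$ guarantees $A_2 < B_2$, and direct substitution in the defining pressure equations gives $s_{B_1^{s_0}} = g_{B_1,B_1^{s_0}} = s_0$, producing the lower bound $s_0$.

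The principal obstacle is the Case 2 upper bound: the tight lower endpoint $v \ge (B_1/B_2)^n$ arises from combining the product constraint $uv \ge B_1^n$ with the hitherto unused bound $u \le B_2^{n-1}$, and it is this coupling that produces $g_{B_1,B_2}$ in place of $s_0$ as well as the emptiness threshold $B_2 \le B_1^{1/2}$. A secondary subtlety in the lower bound is arranging the Cantor construction to respect the single-quotient bound at all large indices rather than merely at the active ones, which is handled by keeping the non-active partial quotients bounded by a constant.
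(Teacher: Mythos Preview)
Your proposal is correct and follows essentially the same architecture as the paper: emptiness via the trivial product bound, the upper bound $s_0$ via $\FF_{B_1,B_2}\subset\EE_2(B_1^n)$ and Theorem~\ref{HWXthm}, the Case~2 upper bound via a cover that simultaneously uses the product constraint and the cap $a_n\lesssim B_2^n$, and both lower bounds by specializing $A_1,A_2$ in Theorem~\ref{dim E(A_1,A_2)} so that $A_1A_2=B_1$ while keeping each factor below $B_2$.

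The differences are purely technical. For the Case~2 upper bound you perform a dyadic decomposition in both $a_n\sim u$ and $a_{n+1}\sim v$ and then sum $v^{-s}u^{1-2s}$ over the constrained region; the paper instead collapses the whole tail $\{a_{n+1}\ge B_1^n/a_n\}$ into a single interval $J_n(a_1,\ldots,a_n)$ of length $\asymp (B_1^n a_n q_{n-1}^2)^{-1}$ and sums over $1\le a_n\le B_2^n$ directly, reaching the same expression $B_2^{n(1-s)}(B_1^n q_{n-1}^2)^{-s}$ in one step. For the lower bounds you take $A_1=B_2-\epsilon$ (Case~2) and $A_1=B_1^{s_0}-\epsilon$ (Case~1) and invoke continuity of $s_{\,\cdot\,}$ and $g_{\,\cdot,\cdot}$, whereas the paper uses the exact values $A_1=B_2$ and $A_1=B_1^{s_0}$ and absorbs the boundary issue into the constants $c_1,c_2$ in the definition of $E(A_1,A_2)$. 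Both pairs of choices work; the paper's covering is slightly cleaner, while your $\epsilon$-argument makes the subset inclusion verification more transparent. Your verification that $s_{B_1^{s_0}}=g_{B_1,B_1^{s_0}}=s_0$ by substituting into the pressure equations, and that $g_{B_1,B_2}<s_{B_2}$ when $B_2<B_1^{s_0}$, matches what the paper uses implicitly.
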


The paper is structured as follows. In Section \ref{S2} we provide some auxiliary results and group together various definitions that we shall appeal to in the course of proving our theorems in the subsequent sections. In Section \ref{cantor}, which serves as the main subsection of this paper, we prove Theorem \ref{dim E(A_1,A_2)}, that is,  we find the Hausdorff dimension for the set $E(A_1,A_2)$. In Section \ref{specificsec} we use this set to prove Theorem \ref{specific}. In Section \ref{FPhi1Phi2}, we prove Theorem \ref{general} using Theorem \ref{specific} as well as some auxiliary results. In Section \ref{examples}, we prove both Propositions \ref{Prop1} and \ref{Prop2}.

\medskip

\noindent {\bf Acknowledgements.}   The research of Mumtaz Hussain and Nikita Shulga is supported by the Australian Research Council Discovery Project (200100994). The authors thank Professor Baowei Wang for useful discussions.

\section{Preliminaries and auxiliary results}\label{S2}

For completeness we give a brief introduction to Hausdorff measures and dimension.  For further details we refer to the beautiful texts \cite{BernikDodson, Falconer_book}.

\subsection{Hausdorff measure and dimension}\label{HM}\

Let $0<s\in\R^n$ let
$E\subset \R^n$.
 Then, for any $\rho>0$ a countable collection $\{B_i\}$ of balls in
$\R^n$ with diameters $\mathrm{diam} (B_i)\le \rho$ such that
$E\subset \bigcup_i B_i$ is called a $\rho$-cover of $E$.
Let
\[
\H_\rho^s(E)=\inf \sum_i \mathrm{diam}(B_i)^s,
\]
where the infimum is taken over all possible $\rho$-covers $\{B_i\}$ of $E$. It is easy to see that $\H_\rho^s(E)$ increases as $\rho$ decreases and so approaches a limit as $\rho \rightarrow 0$. This limit could be zero or infinity, or take a finite positive value. Accordingly, the \textit{$s$-Hausdorff measure $\H^s$} of $E$ is defined to be
\[
\H^s(E)=\lim_{\rho\to 0}\H_\rho^s(E).
\]

It is easy to verify that Hausdorff measure is monotonic and countably sub-additive, and that $\H^s(\varnothing)=0$. Thus it is an outer measure on $\R^n$.


For any subset $E$ one can verify that there exists a unique critical value of $s$ at which $\H^s(E)$ `jumps' from infinity to zero. The value taken by $s$ at this discontinuity is referred to as the \textit{Hausdorff dimension of  $E$} and  is denoted by $\hdim E $; i.e.,
\[
\hdim E :=\inf\{s\in \R_+\;:\; \H^s(E)=0\}.
\] When $s=n$,  $\H^n$ coincides with standard Lebesgue measure on $\R^n$.

Computing Hausdorff dimension of a set is typically accomplished in two steps: obtaining the upper and lower bounds separately.
Upper bounds often can be handled by finding appropriate coverings. When dealing with a limsup set, one 
 {usually applies} the Hausdorff measure version of the famous Borel--Cantelli lemma (see Lemma 3.10 of \cite{BernikDodson}):

\begin{proposition}\label{bclem}
    Let $\{B_i\}_{i\ge 1}$ be a sequence of measurable  sets in $\R$ and suppose that,  $$\sum_i \mathrm{diam}(B_i)^s \, < \, \infty.$$ Then  $$\H^s({\limsup_{i\to\infty}B_i})=0.$$
\end{proposition}
The main tool in establishing the lower bound for the dimension of $E(A_1,A_2)$ will be the following well-known mass distribution principle \cite{Falconer_book}.
\begin{proposition}[Mass Distribution Principle \cite{Falconer_book}]\label{p1}
Let $\mu$ be a probability measure supported on a measurable set $F$. Suppose there are positive constants $c$ and $r_0$ such that
$$\mu(B(x,r))\le c r^s$$
for any ball $B(x,r)$ with radius $r\le r_0$ and center $x\in F$. Then $\hdim F\ge s$.
\end{proposition}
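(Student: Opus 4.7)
The plan is to prove that $\H^s(F) > 0$, which immediately yields $\hdim F \geq s$ from the definition of Hausdorff dimension given in Subsection~\ref{HM}. The probability measure $\mu$ will serve as a witness: it will force every sufficiently fine cover of $F$ to have $\sum_i \mathrm{diam}(B_i)^s$ bounded below by a positive constant independent of the cover.

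First, I would fix $\rho < r_0$ and consider an arbitrary $\rho$-cover $\{B_i\}$ of $F$ by balls. The balls disjoint from $F$ can be discarded, since $\mu$ is supported on $F$ and therefore assigns them zero measure. For each remaining $B_i$, I would pick some $x_i \in B_i \cap F$ and use the triangle inequality to observe that $B_i \subseteq B\bigl(x_i, \mathrm{diam}(B_i)\bigr)$. Applying the hypothesis to this enlarged, $F$-centered ball gives $\mu(B_i) \leq c \cdot \mathrm{diam}(B_i)^s$. Summing over $i$ and using $\mu(F) = 1$ together with countable subadditivity of $\mu$ yields
$$1 = \mu(F) \leq \sum_i \mu(B_i) \leq c \sum_i \mathrm{diam}(B_i)^s,$$
so $\sum_i \mathrm{diam}(B_i)^s \geq 1/c$. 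Taking the infimum over all $\rho$-covers gives $\H^s_\rho(F) \geq 1/c$, and letting $\rho \to 0$ delivers $\H^s(F) \geq 1/c > 0$, as required.

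The only step demanding any care is the re-centering: the hypothesis bounds $\mu$ only on balls centered at points of $F$, while the cover balls may be centered anywhere. The standard observation that $B(y,r) \subseteq B(x, 2r)$ whenever $x \in B(y,r)$ resolves this at the cost of doubling the radius, which is harmless for the $s$-sum since $\mathrm{diam}\bigl(B(x,\mathrm{diam}(B_i))\bigr)$ is within a constant factor of $\mathrm{diam}(B_i)$. This mild technical point is the main obstacle; everything else is a direct application of the hypothesis together with standard properties of the Hausdorff outer measure.
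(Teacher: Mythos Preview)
Your argument is correct and is the standard proof of the mass distribution principle. The paper itself does not supply a proof of this proposition; it is quoted from Falconer \cite{Falconer_book} and used as a tool. So there is nothing in the paper to compare against beyond noting that your write-up matches the textbook argument.

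One small remark on presentation: in your main chain of inequalities you already use the containment $B_i \subseteq B(x_i,\mathrm{diam}(B_i))$ with $x_i\in B_i\cap F$, which is exactly what is needed and introduces no extra constant. Your closing paragraph about doubling the radius via $B(y,r)\subseteq B(x,2r)$ is therefore superfluous and slightly at odds with what you actually did above; you may simply delete it.
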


\subsection{Continued fractions and Diophantine approximation}
 Recall that the Gauss map $T: [0,1)\to [0,1)$ is defined by
\[T(0)=0, ~ T(x)=\frac{1}{x}-\floor*{\frac{1}{x}} \textmd{ for }x\in(0,1),\]
where $\lfloor x\rfloor$ denotes the integer part of $x$. 
We write $x:=[a_{1}(x),a_{2}(x),a_{3}(x),\ldots ]$ for the continued fraction of $x$ where $a_1(x)=\lfloor 1/x \rfloor$, $a_{n}(x)= a_1(T^{n-1}(x))$ for $n\ge2$ are called the partial quotients of $x$. 
The sequences $p_n= p_n(x)$, $q_n= q_n(x)$, referred to as $n^{\rm th}$ convergents, has the recursive relation
\begin {equation}\label{recu}
p_{n+1}=a_{n+1}(x)p_n+p_{n-1}, \ \
q_{n+1}=a_{n+1}(x)q_n+q_{n-1},\ \  n\geq 0.
\end {equation}
Thus $p_n=p_n(x), q_n=q_n(x)$ are determined by the partial quotients $a_1,\dots,a_n$, so we may write $p_n=p_n(a_1,\dots, a_n), q_n=q_n(a_1,\dots,a_n)$. When it is clear which partial quotients are involved, we denote them by $p_n, q_n$ for simplicity.

For any integer vector $(a_1,\dots,a_n)\in \N^n$ with $n\geq 1$, write
\begin{equation*}\label{cyl}
I_n(a_1,\dots,a_n):=\left\{x\in [0, 1): a_1(x)=a_1, \dots, a_n(x)=a_n\right\}
\end{equation*}
for the corresponding `cylinder of order $n$', i.e.\  the set of all real numbers in $[0,1)$ whose continued fraction expansions begin with $(a_1, \dots, a_n).$

We will frequently use the following well known properties of continued fraction expansions.  They are explained in the standard texts \cite{IosKra_book, Khi_63}.

\begin{proposition}\label{pp3} For any {positive} integers $a_1,\dots,a_n$, let $p_n=p_n(a_1,\dots,a_n)$ and $q_n=q_n(a_1,\dots,a_n)$ be defined recursively by \eqref{recu}. {Then:}
\begin{enumerate}[label={\rm (\subscript{\rm P}{\arabic*})}]
\item
\begin{eqnarray*}
I_n(a_1,a_2,\dots,a_n)= \left\{
\begin{array}{ll}
         \left[\frac{p_n}{q_n}, \frac{p_n+p_{n-1}}{q_n+q_{n-1}}\right)     & {\rm if }\ \
         n\ {\rm{is\ even}};\\
         \left(\frac{p_n+p_{n-1}}{q_n+q_{n-1}}, \frac{p_n}{q_n}\right]     & {\rm if }\ \
         n\ {\rm{is\ odd}}.
\end{array}
        \right.
\end{eqnarray*}
{\rm Thus, its length is given by}
\begin{equation*}\label{lencyl}
\frac{1}{2q_n^2}\leq |I_n(a_1,\ldots,a_n)|=\frac{1}{q_n(q_n+q_{n-1})}\leq \frac1{q_n^2},
\end{equation*}
{\rm since} $$
 p_{n-1}q_n-p_nq_{n-1}=(-1)^n, \ {\rm for \ all }\ n\ge 1.
 $$

\item For any $n\geq 1$, $q_n\geq 2^{(n-1)/2}$ and
$$
1\le \frac{q_{n+m}(a_1,\cdots,a_n, b_1,\cdots, b_m)}{q_n(a_1,\cdots, a_n)\cdot q_m(b_1,\cdots,b_m)}\le 2.
$$
\item $$\prod_{i=1}^na_i\leq q_n\leq \prod_{i=1}^n(a_i+1)\leq 2^n\prod_{i=1}^na_i.$$

\item \begin{equation*}\label{p7}
\frac{1}{3a_{n+1}(x)q^2_n(x)}\, <\, \Big|x-\frac{p_n(x)}{q_n(x)}\Big|=\frac{1}{q_n(x)(q_{n+1}(x)+T^{n+1}(x)  q_n(x))}\, < \,\frac{1}{a_{n+1}q^2_n(x)}.
\end{equation*}
\item there exists a constant $K>1$ such that for almost all $x\in [0,1)$, $$
q_n(x)\le K^n, \ {\text{for all $n$ sufficiently large}}.
$$
\end{enumerate}
\end{proposition}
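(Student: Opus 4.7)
The statement of Proposition \ref{pp3} collects five classical facts about continued fractions. My plan is to establish them in the stated order, relying throughout on the recursion \eqref{recu} and on the determinant identity
\[p_{n-1}q_n - p_nq_{n-1} = (-1)^n,\]
which itself is a one-line induction from \eqref{recu}.

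For (P1), I would identify the cylinder $I_n(a_1,\ldots,a_n)$ as the image of $[1,\infty]$ (with endpoints interpreted appropriately for irrationals) under the M\"obius map $f(t)=(tp_n+p_{n-1})/(tq_n+q_{n-1})$, since a point in the cylinder is exactly one whose continued fraction begins $[a_1,a_2,\ldots,a_n,t]$ with $t\ge 1$. The endpoints of the cylinder come from $f(1)$ and $f(\infty)=p_n/q_n$; the left/right assignment alternates with the parity of $n$ because the sign of $f'(t)$ equals the sign of $p_nq_{n-1}-p_{n-1}q_n=(-1)^{n+1}$. The length formula then drops out of a direct computation using the determinant identity, and the two-sided bound in terms of $q_n^2$ uses $q_{n-1}\le q_n$.

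Items (P2)--(P4) are then routine. The lower bound $q_n\ge 2^{(n-1)/2}$ in (P2) is a two-step induction from $q_{n+1}\ge q_n+q_{n-1}\ge 2q_{n-1}$; the concatenation estimate is obtained by iterating \eqref{recu} $m$ times on the tail quotients to write $q_{n+m}(a_1,\ldots,a_n,b_1,\ldots,b_m)=q_n\,q_m(b_1,\ldots,b_m)+q_{n-1}\,p_m(b_1,\ldots,b_m)$, after which both bounds are elementary. Property (P3) is a direct induction from \eqref{recu}. For (P4), I would combine the ``tail'' identity $x=(\alpha_{n+1}p_n+p_{n-1})/(\alpha_{n+1}q_n+q_{n-1})$ where $\alpha_{n+1}=a_{n+1}+T^{n+1}(x)$ with the determinant identity to get the exact formula $|x-p_n/q_n|=1/(q_n(q_{n+1}+T^{n+1}(x)q_n))$; the stated bounds then follow from $T^{n+1}(x)\in[0,1)$ and $a_{n+1}q_n\le q_{n+1}\le (a_{n+1}+1)q_n$.

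The only item that is not purely algebraic is (P5), which I would handle via ergodic theory. Combining (P3) with the Birkhoff ergodic theorem applied to the observable $\log a_1$ under the Gauss measure $d\mu=(\log 2)^{-1}(1+x)^{-1}dx$ (which is $T$-invariant and ergodic, and for which $\int\log a_1\,d\mu$ is finite since $\mu([1/(k+1),1/k])=O(1/k^2)$), one obtains for a.e.\ $x$ that $\frac{1}{n}\sum_{i=1}^n\log a_i(x)\to L:=\int\log a_1\,d\mu$, and hence by (P3), $q_n(x)\le 2^n\prod_{i\le n}a_i(x)\le K^n$ eventually, for any fixed $K>2e^L$. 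The genuine obstacle is this final step: whereas the rest of the proposition is a mechanical unfolding of the recursion \eqref{recu}, (P5) requires an ergodic-theoretic input (essentially L\'evy's theorem for $\log q_n$) rather than elementary algebraic manipulations.
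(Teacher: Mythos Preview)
Your proof sketch is correct in every part. The paper itself does not prove Proposition~\ref{pp3} at all: immediately before the statement it says ``They are explained in the standard texts \cite{IosKra_book, Khi_63}'', so there is nothing to compare against beyond noting that your arguments are the standard ones found in those references. Your handling of (P1)--(P4) via the recursion, the determinant identity, and the M\"obius/tail representation is exactly the textbook route; the concatenation identity $q_{n+m}=q_n\,q_m(b)+q_{n-1}\,p_m(b)$ you use for (P2) is correct (it drops out of the matrix form of \eqref{recu}). For (P5), your argument via Birkhoff applied to $\log a_1$ combined with (P3) is perfectly valid and gives the qualitative statement needed; the more common reference is L\'evy's theorem, which gives the sharper a.e.\ limit $\tfrac{1}{n}\log q_n\to \pi^2/(12\log 2)$, but either route suffices here.
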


Let $\mu$ be the Gauss measure given by $$
d\mu=\frac{1}{(1+x)\log 2}dx.
$$ It is clear that $\mu$ is $T$-invariant and equivalent to Lebesgue measure $\mathcal{L}$.

The next proposition concerns the position of a cylinder in $[0,1)$.
\begin{proposition}[\cite{Khi_63}, Khintchine]\label{pp2} Let $I_n=I_n(a_1,\dots, a_n)$ be a cylinder of order $n$, which is partitioned into sub-cylinders $\{I_{n+1}(a_1,\dots,a_n, a_{n+1}): a_{n+1}\in \N\}$. When $n$ is odd, these sub-cylinders are positioned from left to right, as $a_{n+1}$ increases from 1 to $\infty$; when $n$ is even, they are positioned from right to left.
\end{proposition}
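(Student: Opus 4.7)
The plan is to reduce the position of sub-cylinders to the monotonicity of a single M\"obius transformation. Given the prefix $(a_1,\dots,a_n)$ with associated convergents $p_{n-1}/q_{n-1}$ and $p_n/q_n$, I would introduce
$$f(t) = \frac{tp_n + p_{n-1}}{tq_n + q_{n-1}},\qquad t\geq 1,$$
and first check that the endpoints of the sub-cylinder $I_{n+1}(a_1,\dots,a_n,k)$ are precisely $f(k)$ and $f(k+1)$. Setting $k = a_{n+1}$ in the recurrence \eqref{recu} gives $p_{n+1}/q_{n+1} = f(k)$ and $(p_{n+1}+p_n)/(q_{n+1}+q_n) = f(k+1)$, so this identification is immediate from Proposition \ref{pp3}(P1) applied at level $n+1$.

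The second step is a one-line computation of $f'(t)$. Using the determinant identity $p_{n-1}q_n - p_n q_{n-1} = (-1)^n$ (from Proposition \ref{pp3}(P1)),
$$f'(t) \;=\; \frac{p_n q_{n-1} - p_{n-1}q_n}{(tq_n+q_{n-1})^2} \;=\; \frac{(-1)^{n+1}}{(tq_n+q_{n-1})^2},$$
so $f$ is strictly monotone on $[1,\infty)$ with direction dictated by the parity of $n$: increasing for $n$ odd, decreasing for $n$ even. Combined with Step~1, this tells me that as $a_{n+1}$ ranges over $\N$, the sequence of sub-cylinder endpoints $\{f(k)\}_{k\ge 1}$ is monotone, so the sub-cylinders $I_{n+1}(a_1,\dots,a_n,k)$ tile the parent $I_n$ in exactly the claimed order.

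The only step requiring care, and where I expect the bulk of the bookkeeping to lie, is reconciling two different parities. The \emph{left}/\emph{right} labelling of endpoints of $I_{n+1}$ is determined by the parity of $n+1$ (via Proposition \ref{pp3}(P1)), whereas the monotonicity of $f$ is governed by the parity of $n$. I would close the argument by writing out both cases explicitly: for $n$ odd we have $I_n = (f(1), p_n/q_n]$ and each sub-cylinder equals the half-open interval $[f(k), f(k+1))$, so since $f$ is increasing these intervals tile $I_n$ from left to right in the order $k=1,2,3,\dots$; the case $n$ even is symmetric with every orientation flipped. Once these two parity checks are recorded, the proposition is complete.
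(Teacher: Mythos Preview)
Your argument is correct and is the standard proof of this classical fact. Note, however, that the paper does not actually supply a proof of this proposition: it is stated with attribution to Khintchine \cite{Khi_63} and used as a black box, so there is no ``paper's own proof'' to compare against. Your M\"obius-map approach, reducing the ordering of sub-cylinders to the sign of $f'(t)=(-1)^{n+1}/(tq_n+q_{n-1})^2$, is exactly the textbook argument one would find in Khintchine's book or any standard reference.
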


The following result is due to {\L}uczak \cite{Luczak}.
\begin{lemma}[\cite{Luczak}, {\L}uczak]\label{lemb}For any $b, c>1$, the sets
\begin{align*}
&\left\{x\in[0, 1):  a_{n}(x)\ge c^{b^n}\  {\text{for infinitely many}} \ n\in \N       \right\},\\
&\left\{x\in[0, 1):  a_{n}(x)\ge c^{b^n}\  {\text{for all }} \ n\geq 1 \right\},
\end{align*}
have the same Hausdorff dimension $\frac1{b+1}$.
\end{lemma}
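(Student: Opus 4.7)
Let $A := \{x \in [0,1) : a_n(x) \geq c^{b^n} \text{ for infinitely many } n\}$ and $B := \{x \in [0,1) : a_n(x) \geq c^{b^n} \text{ for all } n \geq 1\}$. Since $B \subseteq A$, it suffices to prove $\dim_H A \leq 1/(b+1)$ and $\dim_H B \geq 1/(b+1)$. For the upper bound, observe that $A = \EE_1(\Phi)$ with $\Phi(n) := c^{b^n}$; one checks $\log B_{\text{WaWu}} = \liminf_{n\to\infty} (b^n \log c)/n = \infty$ and $\log b_{\text{WaWu}} = \liminf_{n\to\infty} \log(b^n \log c)/n = \log b$, so case (ii) of Theorem~\ref{WaWu} gives $\dim_H A = 1/(1+b)$. (A self-contained alternative covers $A$ by the ``tail'' intervals $J_n(a_1,\ldots,a_{n-1}) := \bigcup_{a_n \geq c^{b^n}} I_n(a_1,\ldots,a_n)$ of diameter $\asymp (c^{b^n} q_{n-1}^2)^{-1}$, obtained from Proposition~\ref{pp2}, combined with the Hausdorff--Cantelli lemma Proposition~\ref{bclem}.)

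For the lower bound on $B$, I would build a Cantor-like subset $K \subseteq B$ and apply the mass distribution principle (Proposition~\ref{p1}). At each level $n$, restrict the partial quotient to $D_n := \{k \in \N : c^{b^n} \leq k < 2 c^{b^n}\}$, which has $N_n := \#D_n \asymp c^{b^n}$ elements; let $K$ consist of all $x$ with $a_n(x) \in D_n$ for every $n$, and equip $K$ with the Bernoulli measure $\mu$ giving mass $(N_1 \cdots N_n)^{-1}$ to each admissible level-$n$ cylinder. By Proposition~\ref{pp3} one has $q_n \asymp c^{b + b^2 + \cdots + b^n} \asymp c^{b^{n+1}/(b-1)}$, hence $|I_n| \asymp c^{-2 b^{n+1}/(b-1)}$ and $\mu(I_n) \asymp c^{-b^{n+1}/(b-1)}$. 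The naive ratio $\log \mu(I_n)/\log|I_n| = 1/2$ would give the wrong exponent; what actually matters is that, by Proposition~\ref{pp2}, the sub-cylinders $\{I_{n+1}(a_1,\ldots,a_n,a_{n+1}) : a_{n+1} \in D_{n+1}\}$ of a given $I_n$ are clustered in a single sub-interval of length $L_{n+1} \asymp (c^{b^{n+1}} q_n^2)^{-1} \asymp c^{-b^{n+1}(b+1)/(b-1)}$, far smaller than $|I_n|$.

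To verify $\mu(B(x,r)) \lesssim r^{1/(b+1)}$ for $x \in K$, I would locate the unique $n$ with $|I_{n+1}| \leq r \leq |I_n|$ and distinguish two regimes. In regime (a), $|I_{n+1}| \leq r \leq L_{n+1}$: the ball lies inside the concentration region, where sub-cylinders have average spacing $\asymp |I_{n+1}|$, so $B(x,r)$ meets $\lesssim r/|I_{n+1}|$ of them and $\mu(B(x,r)) \lesssim r\,\mu(I_{n+1})/|I_{n+1}|$. In regime (b), $L_{n+1} \leq r \leq |I_n|$: the mass of $\mu$ within $I_n$ is already exhausted within the concentration region, so $\mu(B(x,r)) \leq \mu(I_n)$ (plus a uniformly bounded contribution from neighbouring parent cylinders at level $n$). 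Inserting the exponents $b^{n+1}/(b-1)$ computed above, both estimates reduce to $\mu(B(x,r)) \lesssim r^{1/(b+1)}$, with equality holding exactly at the transition $r = L_{n+1}$. The main obstacle is precisely this balance at the transition: the choice of scale $|D_n| \asymp c^{b^n}$ is forced in order that the two regimes yield matching exponents, and any smaller or larger scale destroys the equality and weakens or invalidates the bound. Once these estimates are in place, Proposition~\ref{p1} yields $\dim_H K \geq 1/(b+1)$, and $K \subseteq B$ completes the proof.
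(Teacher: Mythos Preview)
The paper does not supply a proof of this lemma; it is quoted from \cite{Luczak} as an established result, so there is no in-paper argument to compare against.

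Your primary upper bound via Theorem~\ref{WaWu} is a valid reduction (case~(ii) applies with $b_{\mathrm{WaWu}}=b$), though it is worth noting that Wang--Wu's case~(ii) is essentially {\L}uczak's theorem itself, so this is more an identification than an independent derivation. Your parenthetical ``self-contained alternative,'' however, does not work: covering $A$ by the tail intervals $J_n(a_1,\ldots,a_{n-1})$ of length $\asymp (c^{b^n}q_{n-1}^2)^{-1}$ and invoking Proposition~\ref{bclem} requires
\[
\sum_{n\ge N}\ \sum_{a_1,\ldots,a_{n-1}\in\N} \bigl(c^{b^n}q_{n-1}^2\bigr)^{-s}<\infty,
\]
but for each fixed $n$ the inner sum $\sum_{a_1,\ldots,a_{n-1}} q_{n-1}^{-2s}$ already diverges whenever $s\le 1/2$ (since $q_{n-1}\le 2^{n-1}\prod a_i$ by (P$_3$), so the sum dominates a product of divergent $\zeta(2s)$-type series). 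As $1/(b+1)<1/2$ for $b>1$, the natural covering gives no information in the range you need; {\L}uczak's actual upper-bound argument is more delicate than this.

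Your lower-bound construction is correct and is essentially {\L}uczak's original argument; the two-regime Frostman estimate you sketch does balance at the claimed exponent. A shorter route available within the paper is to apply Lemma~\ref{helplemma} directly with $s_n=\lceil c^{b^n}\rceil$: the resulting set lies in $B$, and the formula gives
\[
\liminf_{n\to\infty}\frac{\sum_{i=1}^{n} b^{i}}{2\sum_{i=1}^{n} b^{i}+b^{n+1}}
=\liminf_{n\to\infty}\frac{1}{2+(b-1)\,b^{n+1}/(b^{n+1}-b)}
=\frac{1}{b+1},
\]
which delivers the lower bound without repeating the mass-distribution computation.
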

We will also need the following lemma from \cite[Lemma 3.2]{FLWW}.
\begin{lemma}[\cite{FLWW}, Fan-Liao-Wang-Wu]\label{helplemma}
 Let $\{s_n\}_{n\ge1}$ be a sequence of positive integers tending to infinity with $s_n\ge 3$ for all $n \ge 1$. Then for any positive number $N \ge 2$, we have
$$
\hdim \{x\in[0,1):s_n \leq a_n(x) < N s_n \,\, \forall n\ge 1 \} = \liminf_{n\to\infty} \frac{\log(s_1 s_2\cdots s_n) }{2\log(s_1 s_2\cdots s_n)+\log s_{n+1}}.
$$
\end{lemma}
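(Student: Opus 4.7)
The plan is to establish $\hdim E = d_*$ where $E = \{x \in [0,1) : s_n \le a_n(x) < Ns_n \,\, \forall n \ge 1\}$, $A_n := s_1 s_2 \cdots s_n$, and $d_* := \liminf_{n\to\infty} \frac{\log A_n}{2\log A_n + \log s_{n+1}}$. Both bounds operate at the level-$n$ cylinder scale, relying on Proposition~\ref{pp3}, in particular $|I_n| \asymp q_n^{-2}$ and $q_n \asymp A_n$ up to multiplicative factors of size at most $(2N)^n$. A recurring observation is that the hypothesis $s_n \to \infty$ forces $\log A_n / n \to \infty$ (by Ces\`aro averaging), so any factor $C^n$ with $C = C(N)$ is absorbed into $\log A_n$ at the exponential scale.

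For the upper bound, I plan to cover $E$ at each level $n$ by the admissible ``blocks''
\[
J_n(a_1,\dots,a_n) := \bigcup_{s_{n+1}\le a_{n+1} < Ns_{n+1}} I_{n+1}(a_1,\dots,a_n,a_{n+1}),
\]
taken over $(a_1,\dots,a_n)$ with $s_i \le a_i < Ns_i$. By Proposition~\ref{pp2} each $J_n$ is a single interval of length $\asymp 1/(s_{n+1} q_n^2) \le 1/(s_{n+1} A_n^2)$. Since there are at most $N^n A_n$ admissible tuples,
\[
\sum_{J_n} |J_n|^s \le N^n A_n^{1-2s} s_{n+1}^{-s}.
\]
For any $s > d_*$, the definition of liminf yields a subsequence $n_k \to \infty$ on which the exponent $(1-2s)\log A_{n_k} - s \log s_{n_k+1} \to -\infty$; the extra $n_k \log N$ is swallowed by $\log A_{n_k}$. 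Hence $\mathcal{H}^s(E) = 0$ and $\hdim E \le d_*$.

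For the lower bound, I plan to apply the Mass Distribution Principle (Proposition~\ref{p1}) to the measure $\mu$ on $E$ defined by uniform allocation at each level:
\[
\mu(I_n(a_1,\dots,a_n)) := \prod_{i=1}^n \frac{1}{M_i}, \qquad M_i := \#\{a \in \N : s_i \le a < Ns_i\} \asymp (N-1)s_i.
\]
For a ball $B(x,r)$ centered at $x \in E$ with $r$ small, choose $n$ with $|I_{n+1}(x)| \le 2r < |I_n(x)|$ and split the analysis according to the size of $r$ relative to a single sub-cylinder length $\asymp 1/(s_{n+1}^2 q_n^2)$ and to the admissible-strip width $\asymp 1/(s_{n+1} q_n^2)$ inside $I_n$: (i)~$r$ within $O(1)$ of the single sub-cylinder scale gives $\mu(B) \lesssim \mu(I_n)/s_{n+1}$; (ii)~$r$ in the intermediate range gives $\mu(B) \lesssim r s_{n+1} q_n^2 \mu(I_n)$; and (iii)~$r$ up to $|I_n(x)|$ gives $\mu(B) \le \mu(I_n)$. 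Regimes (ii) and (iii) both yield the worst-case bound $\mu(B)/r^s \lesssim \mu(I_n)(s_{n+1}q_n^2)^s$, while regime (i) is strictly weaker since $d_* \le 1/2$. This uniform bound is $\lesssim 1$ precisely when $s \le d_*$, after absorbing the $C^n$ factors using $\log A_n / n \to \infty$, giving $\hdim E \ge d_*$.

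The main obstacle is the regime-by-regime case analysis in the lower bound. The admissible sub-cylinders at level $n+1$ differ in length by a factor of up to $N^2$ (as $a_{n+1}$ ranges from $s_{n+1}$ to $Ns_{n+1}-1$), so both intermediate scales $1/(s_{n+1}^2 q_n^2)$ and $1/(s_{n+1} q_n^2)$ are genuine and must be handled uniformly; one has to verify that the three regimes do produce the same critical exponent $d_*$. A related technical point is that the polynomial-in-$n$ constants $N^n$ and $(2N)^{O(n)}$ that come out of the cylinder length estimates and the counts $M_i$ must be absorbed into the exponent, which works precisely because $s_n \to \infty$ forces $\log A_n / n \to \infty$.
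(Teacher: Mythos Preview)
The paper does not prove this lemma; it is quoted verbatim from \cite[Lemma 3.2]{FLWW} as an auxiliary tool, so there is no in-paper proof to compare against. Your argument is correct and is essentially the standard one: the upper bound via the natural cover by the admissible strips $J_n$ of width $\asymp 1/(s_{n+1}q_n^2)$, evaluated along a subsequence realising the $\liminf$, and the lower bound via the mass distribution principle applied to the equidistributed measure on admissible cylinders. The three-regime split for $B(x,r)$ (single sub-cylinder scale, intermediate, up to $|I_n|$) is exactly what is needed, and your observation that $d_*\le 1/2$ makes regime~(i) dominated by~(iii) is the right way to collapse the cases. One small point worth making explicit in regime~(iii): since $2r<|I_n(x)|$ and adjacent admissible level-$n$ cylinders have comparable length and identical $\mu$-mass, $B(x,r)$ meets at most $O(1)$ of them, so $\mu(B)\lesssim \mu(I_n(x))$ genuinely holds. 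Your handling of the $C(N)^n$ nuisance factors via $\log A_n/n\to\infty$ is also the standard absorption and is correct.
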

\subsection{Pressure function}\label{Pressure Functions}
When dealing with dimension problems in non-linear dynamical system, pressure function and other concepts from thermodynamics are good tools. The concept of a general pressure function was introduced by Walters \cite{Walters_book} as a generalization of entropy. It describes exponential growth rate of ergodic sum which determine the whole system in some sense. We are interested in a way of calculating Hausdorff dimensions using pressure functions.

A method in \cite[Theorem 2.2.1]{Pollicott2005} can be used to calculate the Hausdorff dimension of self-similar sets for linear system. As for the non-linear setting, the relation between Hausdorff dimension and pressure functions is given in \cite{Pollicott2005} as the corresponding generalization of Moran \cite{Moran}. The main required ingredient is the following

\begin{definition}
Given any continuous function $f:X\rightarrow 
\mathbb{R}
$ we define its \emph{pressure} $\mathsf P\left( f\right) $, with respect to $T$,  as
\begin{equation*}
\mathsf P\left( f\right) :=\underset{n\rightarrow \infty }{\lim \sup }\frac{1}{n}
\log { {\left(
\sum\limits_{\substack{ T^{n}x=x  \\ x\in X}}e^{f\left( x\right) +f\left(
Tx\right) +\cdots +f\left( T^{n-1}x\right) }\right) }}
\end{equation*}
where the summation is over all periodic points. 
\end{definition}
We note that we can write $``\lim"$ instead of $``\limsup"$ since the above limit actually exists. Combined this with dimension theory, we are concerned with a family of functions $f_{t}\left( x\right) =-t\log
\left\vert T^{\prime }\left( x\right) \right\vert $, $x\in X$ and $0\leq t\leq d$. Substituting these functions into the above definition leads to a real-valued map
\begin{equation*}
\begin{array}{l}
\left[ 0,d\right] \rightarrow 
\mathbb{R},  \qquad
t\mapsto \mathsf P\left( f_{t}\right) =\underset{n\rightarrow \infty }{\lim \sup }
\frac{1}{n}\log \left( \sum\limits_{\substack{ T^{n}x=x  \\ x\in X}}\frac{1
}{\left\vert \left( T^{n}\right) ^{\prime }\left( x\right) \right\vert ^{t}}
\right).
\end{array}
\end{equation*}

We observe that a function $t\mapsto \mathsf P\left( f_{t}\right) $ satisfies interesting properties:

\begin{enumerate}
\item[(i)] $\mathsf P\left( 0\right) =\log k$;

\item[(ii)] $t\mapsto \mathsf P\left( f_{t}\right) $ is strictly 
decreasing; 

\item[(iii)] $t\mapsto \mathsf P\left( f_{t}\right) $ is analytic on $\left[ 0,d
\right] $.
\end{enumerate}

Proof can be found on page 32 of \cite{Pollicott2005}. Moreover, property (i) is easily obtained by the definition.

Bowen and Ruelle had given key results for calculating the Hausdorff dimension from the  pressure function. Bowen
showed the result in the context of quasi-circles,  and Bowen-Ruelle developed the
method for the case of hyperbolic Julia sets. More precisely,

\begin{theorem}[Bowen-Ruelle]
\label{Bowen-Ruelle Theorem}Let $T:X\rightarrow X$ be a $C^{1+\alpha }$
conformal expanding map, for some $\alpha>0$. There is a unique solution $0\leq s\leq d$ to
\begin{equation*}
\mathsf P\left( -s\log \left\vert T^{\prime }\right\vert \right) =0,
\end{equation*}
which occurs precisely at $s=\dim _{H}X$.
\end{theorem}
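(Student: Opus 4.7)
The plan is to split the proof into existence/uniqueness of the zero $s_0$ of $\varphi(t) = \mathsf{P}(-t\log|T'|)$, and identification $s_0 = \dim_H X$ via separate upper and lower bounds on $\dim_H X$. Existence and uniqueness follow directly from the properties (i)--(iii) of $\varphi$ just recorded: $\varphi$ is continuous, strictly decreasing, and analytic on $[0,d]$, with $\varphi(0) = \log k > 0$. The expansion hypothesis $|T'| \ge \lambda > 1$ forces $\varphi(d) \le 0$, because $\sum_{T^n x = x} |(T^n)'(x)|^{-d}$ is comparable (via bounded distortion) to a Lebesgue $d$-volume sum on $X$ which stays bounded. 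The intermediate value theorem then yields a unique $s_0 \in (0,d]$ with $\varphi(s_0) = 0$.

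For the upper bound $\dim_H X \le s_0$, I fix $s > s_0$ and use a Markov partition of $X$ refined $n$ times into cylinders $X_{i_1 \cdots i_n}$. The $C^{1+\alpha}$ regularity and conformality give bounded distortion, so that $\diam(X_{i_1\cdots i_n}) \asymp |(T^n)'(\xi)|^{-1}$ for any $\xi$ in the cylinder. Summing over cylinders,
\[
\sum_{i_1,\ldots,i_n} \diam(X_{i_1\cdots i_n})^s \asymp \sum_{i_1,\ldots,i_n}\frac{1}{|(T^n)'(\xi_{i_1\cdots i_n})|^{s}} \asymp \sum_{T^n x = x}\frac{1}{|(T^n)'(x)|^{s}} \asymp e^{n\varphi(s)} \longrightarrow 0,
\]
since $\varphi(s) < 0$. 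Hence $\H^s(X) = 0$ for every $s > s_0$, giving $\dim_H X \le s_0$.

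For the lower bound $\dim_H X \ge s_0$, which is the heart of the theorem, I will invoke Ruelle-Perron-Frobenius theory applied to the Hölder potential $\psi = -s_0 \log|T'|$. The associated transfer operator $\mathcal{L}_\psi$ has a simple maximal eigenvalue $e^{\varphi(s_0)} = 1$, producing a unique equilibrium measure $\mu$ that satisfies the Gibbs property
\[
\mu(X_{i_1\cdots i_n}) \asymp \frac{\exp\bigl(S_n \psi(\xi)\bigr)}{e^{n\varphi(s_0)}} = |(T^n)'(\xi)|^{-s_0}.
\]
Combined with the diameter estimate from Step 2, this yields $\mu(B(x,r)) \ll r^{s_0}$ for all sufficiently small balls, by comparing a ball of radius $r$ with a cylinder of the appropriate depth $n(r)$. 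Proposition \ref{p1} (the mass distribution principle) then concludes $\dim_H X \ge s_0$, and combined with the matching upper bound this finishes the proof.

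The main obstacle is the lower bound: executing it rigorously requires assembling the full thermodynamic formalism for non-linear systems -- transfer operator, spectral gap, Ruelle-Perron-Frobenius theorem, bounded distortion, existence and uniqueness of equilibrium states -- and the $C^{1+\alpha}$ plus conformality hypothesis is precisely what is needed to guarantee Hölder regularity of $\log|T'|$ and the geometric control on cylinders that feeds into each of these ingredients.
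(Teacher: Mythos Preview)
The paper does not supply a proof of this theorem. It is quoted in Section~\ref{Pressure Functions} as a classical background result attributed to Bowen and Ruelle, with the reader referred to Pollicott's lecture notes \cite{Pollicott2005} for the surrounding theory; the properties (i)--(iii) of $t\mapsto \mathsf P(f_t)$ that you invoke are likewise stated there without proof and cited to the same source. So there is no ``paper's own proof'' to compare your proposal against.

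That said, your sketch is the standard route and is essentially correct in outline. A couple of places would need tightening before it stands on its own: the claim $\varphi(d)\le 0$ deserves a cleaner justification than ``Lebesgue $d$-volume sum stays bounded'' (one usually argues directly from $|(T^n)'|\ge \lambda^n$ and the fact that $X$ carries a finite conformal measure, or simply observes that $\mathsf P(-d\log|T'|)\le h_{\mathrm{top}}(T)-d\log\lambda$ via the variational principle); and the passage from the Gibbs bound $\mu(X_{i_1\cdots i_n})\asymp |(T^n)'(\xi)|^{-s_0}$ to the ball estimate $\mu(B(x,r))\ll r^{s_0}$ requires a Moran-type covering lemma to control how many cylinders of comparable diameter can meet a given ball, which in turn uses conformality. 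These are routine in the conformal expanding setting but are exactly the steps where the hypotheses are used, so in a self-contained write-up they should be made explicit.
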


More details and context on pressure functions can be found in \cite{MaUr96,MaUr99,MaUr03}. We use the fact that the pressure function with a continuous potential can be approximated by the pressure function restricted to the sub-systems in continued fractions.

Let $\mathcal{A}\subset\mathbb{N}$ be a finite or infinite set
and define 
\begin{equation*}
X_{\mathcal{A}}=\{x\in [0,1):{\text{for all}}\ n\geq 1,a_{n}(x)\in 
\mathcal{A}\}.
\end{equation*}
Then $(X_{\mathcal{A}},T)$ is a subsystem of $([0,1),T)$ where $T$ is a
Gauss map. Given any real function $\psi:[0,1)\rightarrow \mathbb{R},$ the pressure function restricted to
the system $(X_{\mathcal{A}},T)$ is defined by 
\begin{equation}\label{5}
\mathsf{P}_{\mathcal{A}}(T,\psi):=\lim_{n\rightarrow \infty }\frac{1}{n}\log \sum_{a_{1},\cdots ,a_{n}\in \mathcal{A}}\sup_{x\in X_{\mathcal{A}}}e^{S_{n}\psi([a_{1},\cdots ,a_{n}+x])},  
\end{equation}
where $S_{n}\psi (x)$ denotes the ergodic sum $\psi(x)+\cdots
+\psi (T^{n-1}x)$. For simplicity, we denote $\mathsf{P}_{\mathbb{N}}(T,\psi)$ by $
\mathsf{P}(T,\psi )$ when $\mathcal{A}=\mathbb{N}$. We note that the supremum in equation \eqref{5} can be removed if $\psi$ satisfy the continuity property.

For each $n\geq1$, the $n^{\rm th}$ variation of $\psi$ is denoted by 
\begin{equation*}
{\mathop{\rm{Var}}}_{n}(\psi):=\sup\Big\{|\psi(x)-\psi(y)|:
I_n(x)=I_n(y)\Big\}.
\end{equation*}
The following results \cite[Proposition 2.4]{LiWaWuXu14} ensure the existence of the limit in the equation (\ref{5}).

\begin{proposition}[{\protect\cite{LiWaWuXu14}, Li-Wang-Wu-Xu}]
\label{pp1} Let $\psi:[0,1)\to \mathbb{R}$ be a real function with $
\mathrm{Var}_1(\psi)<\infty$ and $\mathrm{Var}_{n}(\psi)\to 0$ as $
n\to \infty$. Then the limit defining $\mathsf{P}_\mathcal{A}(T,\psi)$
exists and the value of $\mathsf{P}_\mathcal{A}(T,\psi)$ remains the same
even without taking supremum over $x\in X_\mathcal{A}$ in \eqref{5}.
\end{proposition}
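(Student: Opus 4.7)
The plan is to prove the two assertions separately: existence of the limit via submultiplicativity of the partition function, and invariance under removing the supremum via the variation hypothesis.

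\textbf{Step 1 (Submultiplicativity).} Write $Y_n := \sum_{a_1,\ldots,a_n\in\mathcal{A}} \sup_{x\in X_{\mathcal{A}}} e^{S_n\psi([a_1,\ldots,a_n+x])}$. For any $y\in X_{\mathcal{A}}$ and $w=[a_1,\ldots,a_{m+n}+y]$, the cocycle identity $S_{m+n}\psi(w)=S_m\psi(w)+S_n\psi(T^m w)$ together with the factorisations $w=[a_1,\ldots,a_m+z]$ with $z=[a_{m+1},\ldots,a_{m+n}+y]\in X_{\mathcal{A}}$ (admissibility is preserved because $a_{m+1},\ldots,a_{m+n}\in\mathcal{A}$) and $T^m w=[a_{m+1},\ldots,a_{m+n}+y]$ yields
\[
e^{S_{m+n}\psi(w)} \le \sup_{z\in X_{\mathcal{A}}} e^{S_m\psi([a_1,\ldots,a_m+z])}\cdot \sup_{y\in X_{\mathcal{A}}} e^{S_n\psi([a_{m+1},\ldots,a_{m+n}+y])}.
\]
Taking the supremum over $y\in X_{\mathcal{A}}$, summing over all $(a_1,\ldots,a_{m+n})\in\mathcal{A}^{m+n}$, and factoring the double sum produces $Y_{m+n}\le Y_m\, Y_n$. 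Fekete's subadditive lemma applied to $\log Y_n$ then yields existence of $\lim_n n^{-1}\log Y_n=\inf_n n^{-1}\log Y_n$.

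\textbf{Step 2 (Sample-point independence).} For $u,v$ in a common cylinder $I_n(a_1,\ldots,a_n)$, the iterates $T^k u$ and $T^k v$ lie in the common cylinder $I_{n-k}(a_{k+1},\ldots,a_n)$, so summing the pointwise bounds $|\psi(T^k u)-\psi(T^k v)|\le\mathrm{Var}_{n-k}(\psi)$ for $0\le k<n$ gives the uniform estimate
\[
|S_n\psi(u)-S_n\psi(v)|\le\sum_{j=1}^n\mathrm{Var}_j(\psi)=:V_n.
\]
Fix any choice of sample points $x_{a_1,\ldots,a_n}\in I_n(a_1,\ldots,a_n)\cap X_{\mathcal{A}}$ and set $Y'_n:=\sum_{a_1,\ldots,a_n\in\mathcal{A}} e^{S_n\psi(x_{a_1,\ldots,a_n})}$. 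The preceding estimate gives $Y'_n\le Y_n\le e^{V_n}Y'_n$. Since $\mathrm{Var}_j(\psi)\to 0$, Cesàro's theorem gives $V_n/n\to 0$, so the two partition functions share the same exponential growth rate, as required.

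The main technical point I anticipate is purely bookkeeping: correctly identifying how a single tail parameter $y\in X_{\mathcal{A}}$ simultaneously parameterises the ``tail'' of the order-$(m+n)$ cylinder and the tail of the order-$n$ cylinder after applying $T^m$, while checking that the intermediate object $z$ remains in $X_{\mathcal{A}}$. The hypothesis $\mathrm{Var}_1(\psi)<\infty$ ensures each individual $\mathrm{Var}_j(\psi)$ is finite (since $\mathrm{Var}_j$ is non-increasing in $j$), which in turn makes the bound $V_n=o(n)$ substantive; once these identifications are clean, both steps reduce to standard thermodynamic formalism arguments.
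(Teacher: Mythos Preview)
Your argument is correct and is the standard thermodynamic-formalism proof: submultiplicativity of the partition function gives existence of the limit via Fekete's lemma, and the bounded-distortion estimate $|S_n\psi(u)-S_n\psi(v)|\le\sum_{j=1}^n\mathrm{Var}_j(\psi)=o(n)$ shows the supremum can be replaced by any choice of sample point. The paper itself does not prove this proposition at all---it is quoted verbatim from \cite[Proposition~2.4]{LiWaWuXu14} as a background result---so there is nothing to compare against, but what you have written is essentially the argument one finds in that reference.
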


The system $([0,1),T)$ is approximated by its subsystems $(X_{\mathcal{A}},T)
$ then the pressure function has a continuity property in the system of
continued fractions. A detailed proof can be seen in \cite[Proposition 2]{HaMaUr} or \cite{LiWaWuXu14}.

\begin{proposition}[{\protect\cite{HaMaUr}, Hanus-Mauldin-Urba\'nski}]
\label{l5} Let $\psi:[0,1)\to \mathbb{R}$ be a real function with $
\mathrm{Var}_1(\psi)<\infty$ and $\mathrm{Var}_{n}(\psi)\to 0$ as $n\to
\infty$. We have 
\begin{equation*}
\mathsf{P}_{\mathbb{N}}(T, \psi)=\sup\{\mathsf{P}_\mathcal{A}(T,\psi): 
\mathcal{A}\ \mathrm{is \ a \ finite\ subset\ of }\ \mathbb{N}\}.
\end{equation*}
\end{proposition}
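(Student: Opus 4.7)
The plan is to prove the two inequalities separately. The easy direction $\sup_{\mathcal{A}}\mathsf{P}_{\mathcal{A}}(T,\psi)\le \mathsf{P}_{\mathbb{N}}(T,\psi)$ is immediate from the definitions: for any finite $\mathcal{A}\subset\mathbb{N}$, the sum in the definition of $\mathsf{P}_{\mathcal{A}}(T,\psi)$ is a sub-sum of the one defining $\mathsf{P}_{\mathbb{N}}(T,\psi)$ and $X_{\mathcal{A}}\subset[0,1)$, so the inequality holds for every $n$ and survives taking $\frac{1}{n}\log$ and the limit. By Proposition \ref{pp1}, the hypotheses $\mathrm{Var}_1(\psi)<\infty$ and $\mathrm{Var}_n(\psi)\to 0$ guarantee that both limits exist and can be computed without the supremum over $x$.

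For the reverse inequality, I would fix $\epsilon>0$ and set $Z_n^{(\mathcal{A})}\defeq \sum_{a_1,\dots,a_n\in\mathcal{A}} e^{S_n\psi([a_1,\dots,a_n])}$. First I would choose $n_0$ with $\tfrac{1}{n_0}\log Z_{n_0}^{(\mathbb{N})}\ge \mathsf{P}_{\mathbb{N}}(T,\psi)-\tfrac{\epsilon}{2}$, which is possible by the limit definition of $\mathsf{P}_{\mathbb{N}}(T,\psi)$. Since all summands are positive, a truncation $\mathcal{A}_N=\{1,\dots,N\}$ with $N$ large gives $Z_{n_0}^{(\mathcal{A}_N)}\ge \tfrac12 Z_{n_0}^{(\mathbb{N})}$, hence $\tfrac{1}{n_0}\log Z_{n_0}^{(\mathcal{A}_N)}\ge \mathsf{P}_{\mathbb{N}}(T,\psi)-\epsilon$ after slightly enlarging $n_0$. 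The key step is then to promote this single-scale estimate into a lower bound on $\mathsf{P}_{\mathcal{A}_N}(T,\psi)$ itself. For this I would establish an almost-supermultiplicativity identity: concatenating $k$ words from $\{1,\dots,N\}^{n_0}$ produces a word in $\{1,\dots,N\}^{kn_0}$, and using the bounded distortion of the Gauss map from Proposition \ref{pp3} together with $\mathrm{Var}_m(\psi)\to 0$, one obtains $Z_{kn_0}^{(\mathcal{A}_N)}\ge e^{-k\delta(n_0)}\bigl(Z_{n_0}^{(\mathcal{A}_N)}\bigr)^k$ where $\delta(n_0)/n_0\to 0$ as $n_0\to\infty$. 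Passing to $k\to\infty$ inside the definition of $\mathsf{P}_{\mathcal{A}_N}(T,\psi)$ then yields $\mathsf{P}_{\mathcal{A}_N}(T,\psi)\ge \tfrac{1}{n_0}\log Z_{n_0}^{(\mathcal{A}_N)}-\delta(n_0)/n_0\ge \mathsf{P}_{\mathbb{N}}(T,\psi)-2\epsilon$ for $n_0$ sufficiently large, and letting $\epsilon\to 0$ completes the argument.

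The hard part is the almost-supermultiplicativity of $Z_n^{(\mathcal{A}_N)}$. The subtlety is that $S_{kn_0}\psi$ along a concatenated cylinder is not literally the sum of the block-by-block values of $S_{n_0}\psi$, because after $jn_0$ iterations the orbit lies in a sub-cylinder that pins down only its next $n_0$ partial quotients and no further information. The total error across $k$ blocks is bounded by a quantity of the form $k\sum_{m=1}^{n_0}\mathrm{Var}_m(\psi)$ (combined with bounded distortion of the Gauss map), and the hypothesis $\mathrm{Var}_m(\psi)\to 0$ is precisely what forces this accumulated error, divided by $kn_0$, to vanish as $n_0\to\infty$ via a Ces\`aro-style averaging. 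The continuity assumptions on $\psi$ thus enter the proof essentially, not merely in stating the result.
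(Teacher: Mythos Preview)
The paper does not actually prove this proposition; it merely states it and refers the reader to \cite{HaMaUr} (and also \cite{LiWaWuXu14}) for a detailed proof. So there is no in-paper argument to compare against.

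Your outline is essentially the standard proof and is correct in substance. A couple of points worth tightening. First, the order of quantifiers should be arranged more cleanly: fix $\epsilon>0$, then choose $n_0$ large enough to satisfy \emph{simultaneously} (i) $\tfrac{1}{n_0}\log Z_{n_0}^{(\mathbb{N})}\ge \mathsf{P}_{\mathbb{N}}(T,\psi)-\tfrac{\epsilon}{3}$, (ii) $\tfrac{\log 2}{n_0}<\tfrac{\epsilon}{3}$, and (iii) $\tfrac{1}{n_0}\sum_{m=1}^{n_0}\mathrm{Var}_m(\psi)<\tfrac{\epsilon}{3}$; all three hold eventually since the limit in Proposition~\ref{pp1} exists and $\mathrm{Var}_m(\psi)\to 0$ gives the Ces\`aro average. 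Only then choose $N=N(n_0)$. Your phrase ``after slightly enlarging $n_0$'' is doing this work implicitly, but as written it reads as if $n_0$ is being moved after $N$ is fixed, which would be circular. Second, you should note that the bound $\delta(n_0)=\sum_{m=1}^{n_0}\mathrm{Var}_m(\psi)$ does not depend on $N$ --- this is what makes the whole scheme work, and it is worth saying explicitly. Finally, the argument as written tacitly assumes $Z_{n_0}^{(\mathbb{N})}<\infty$; if $\mathsf{P}_{\mathbb{N}}(T,\psi)=+\infty$ one replaces the truncation step by ``for any $M$, choose $N$ with $Z_{n_0}^{(\mathcal{A}_N)}\ge e^{Mn_0}$'' and the same supermultiplicativity gives $\sup_{\mathcal{A}}\mathsf{P}_{\mathcal{A}}=+\infty$.
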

Now we consider the specific potentials,
\begin{align*}
\psi_{1}(x)&=-s(\log B+\log |T^{\prime }(x)|)\\ 
\psi_{2}(x)&=-s\log|T^{\prime }(x)|-s\log B_{1}+(1-s)\log B_{2}
\end{align*}
where $
1<B,B_{1},B_{2}<\infty,$ and $s\geq0$.
 It is clear that $\psi_{1}$ and $\psi_{2}$ satisfy the variation condition and then Proposition \ref{l5} holds.

Thus, the pressure function \eqref{5} with potential $\psi_{1}$ is represented by 
\begin{align*}  \label{gddd}
\mathsf P_{\mathcal{A}}(T, -s(\log B+\log |T^{\prime }(x)|) )&=\lim_{n\to\infty}
\frac1n \log \sum_{a_1,\ldots,a_n \in \mathcal{A}} e^{ S_n (-s(\log B+\log
|T^{\prime }(x)|)) }  \notag \\
&=\lim_{n\to\infty} \frac1n \log
\sum_{a_1,\ldots,a_n \in \mathcal{A}}\left(\frac1{B^{n}q_n^2}\right)^s.
\end{align*}
The last equality holds by
\begin{equation*}
{S_n (-s(\log B+\log |T^{\prime }(x)|)) }={-ns\log B-s\log q_{n}^{2}}.
\end{equation*}
which is easy to check by Proposition \ref{pp3}.

As before, we obtain the pressure function with potential $\psi_{2}$ by
\begin{align*}
\mathsf P_{\mathcal{A}}(T, -s\log|T^{\prime }(x)|-s\log B_{1}+(1-s)\log B_{2})&=\lim_{n\to\infty}
\frac1n \log \sum_{a_1,\ldots,a_n \in \mathcal{A}} e^{ S_n (-s\log|T^{\prime }(x)|-s\log B_{1}+(1-s)\log B_{2}) }  \notag \\
&=\lim_{n\to\infty} \frac1n \log
\sum_{a_1,\ldots,a_n \in \mathcal{A}}\left(\frac1{B_{1}^{n}q_n^2}\right)^sB_{2}^{(1-s)n}.
\end{align*}
For any $n\geq1$ and $s\geq0,$ we write
\begin{align*}\label{gnro}
f_{n}^{(1)}\left( s \right)&:=\sum_{a_{1},\ldots ,a_{n}\in \mathcal{A}}\frac{1}{
\left( B^{n}q_{n}^{2}\right) ^{s }}  \\
f_{n}^{(2)}\left( s \right)&:=\sum_{a_1,\ldots,a_n \in \mathcal{A}}\left(\frac1{B_{1}^{n}q_n^2}\right)^sB_{2}^{(1-s)n}.
\end{align*}
and denote
\begin{equation*}
s_{n,B}\left( \mathcal{A}\right) =\inf \left\{ s \geq 0:f_{n}^{(1)}\left( s
\right) \leq 1\right\},g_{n,B_{1},B_{2}}\left( \mathcal{A}\right) =\inf \left\{ s \geq 0:f_{n}^{(2)}\left( s
\right) \leq 1\right\}, \label{tnB}
\end{equation*} 
\begin{align*}  \label{eqpf2}
s_{B}(\mathcal{A})=\inf \{s\geq 0 :\mathsf{P}_{\mathcal{A}}(T, \psi_{1})\le 0\},&g_{B_{1},B_{2}}(\mathcal{A})=\inf \{s\geq 0 :\mathsf{P}_{\mathcal{A}}(T, \psi_{2})\le 0\}, \\
s_{B}(\mathbb{N})=\inf \{s\geq 0 :\mathsf{P}(T, \psi_{1})\le 0\},&g_{B_{1},B_{2}}(\mathbb{N})=\inf \{s\geq 0 :\mathsf{P}(T, \psi_{2})\le 0\}.
\end{align*}

If $\mathcal{A}\in\mathbb{N}$ is a finite set, then by \cite{WaWu08} it is
straightforward to check that both $f_{n}^{(i)}\left( s\right) $ and $\mathsf{P}_{
\mathcal{A}}(T, \psi_{i})$ for $i=1,2$ are monotonically decreasing and
continuous with respect to $s$. Thus, $s_{n,B}\left( \mathcal{A}\right)$, $s_{B}(
\mathcal{A})$, $g_{n,B_{1},B_{2}}\left( \mathcal{A}\right)$ and $g_{B_{1},B_{2}}(
\mathcal{A})$ are, respectively, the unique solutions of $f_{n}^{(1)}\left( s\right)
= 1 $, $\mathsf{P}_{\mathcal{A}}(T, \psi_{1})= 0$, $f_{n}^{(2)}\left( s\right)
= 1 $ and $\mathsf{P}_{\mathcal{A}}(T, \psi_{2})= 0.$

For simplicity, when $\mathcal{A}=\left\{ 1,2,\ldots ,M
\right\}$ for some $M>0$, we write $s_{n,B}\left({M }\right) $ for $
s_{n,B}\left( \mathcal{A}\right) $, $s_{B}\left({M }\right) $ for $
s_{B}\left( \mathcal{A}\right) $, $g_{n,B_{1},B_{2}}\left({M }\right) $ for $
g_{n,B_{1},B_{2}}\left( \mathcal{A}\right) $ and $g_{B_{1},B_{2}}\left({M }\right) $ for $
g_{B_{1},B_{2}}\left( \mathcal{A}\right) $. When $\mathcal{A}=\mathbb{N}$, we write $s_{n,B}$ for $
s_{n,B}(\mathbb{N})$, $s_{B}$ for $
s_{B}(\mathbb{N})$, $g_{n,B_{1},B_{2}}$ for $
g_{n,B_{1},B_{2}}(\mathbb{N})$ and $g_{B_{1},B_{2}}$ for $
g_{B_{1},B_{2}}(\mathbb{N})$.

As a consequence, we have 
\begin{corollary} \label{cor2.1}For any integer $M\in\N,$
\label{p2} 
\begin{equation*}
\lim_{n\to \infty}s_{n,B}(M)=s_{B}(M), \ \ \lim_{M\to \infty}s_{B}(M)=s_{B}, \ \ \lim_{n\to \infty}g_{n,B_{1},B_{2}}(M)=g_{B_{1},B_{2}}(M),\ \ \lim_{M\to \infty}g_{B_{1},B_{2}}(M)=g_{B_{1},B_{2}}.
\end{equation*}
Note that,  $s_{B}$ and $g_{B_{1},B_{2}}$ are continuous respectively as a function of $B$ and $B_{1},B_{2}$. Moreover,
$$\lim_{B\rightarrow 1}s_{B}=1,\ \ \lim_{B\rightarrow\infty}s_{B}=1/2.$$
\end{corollary}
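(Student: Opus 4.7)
The plan is to verify each assertion in the corollary by combining monotonicity and continuity of the auxiliary functions $f_n^{(i)}(s)$ with the pressure-function approximation results in Propositions \ref{pp1} and \ref{l5}, and the Bowen--Ruelle formula, Theorem \ref{Bowen-Ruelle Theorem}. Throughout I use that $s_{n,B}(M)$, $s_B(M)$, $g_{n,B_1,B_2}(M)$, $g_{B_1,B_2}(M)$ are unique zeros of strictly decreasing continuous functions of $s$ (for $s_B$ and $g_{B_1,B_2}$, on the range $s>1/2$ where the infinite-alphabet sums converge), and that $f_n^{(i)}$ satisfies the sub-multiplicative estimate $f_{n+m}^{(i)}(s)\leq f_n^{(i)}(s)\,f_m^{(i)}(s)$, a direct consequence of $q_{n+m}\geq q_nq_m$ from property (P$_2$) of Proposition \ref{pp3}.

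For the first limit $\lim_{n\to\infty}s_{n,B}(M)=s_B(M)$ (and its $g$-analogue), I argue directly from the definition of the pressure: $\tfrac1n\log f_n^{(i)}(s)\to \mathsf{P}_{\{1,\dots,M\}}(T,\psi_i)$ pointwise, and the limit is continuous and strictly decreasing in $s$, vanishing at $s_B(M)$. Given $\varepsilon>0$, the limit is strictly positive at $s_B(M)-\varepsilon$ and strictly negative at $s_B(M)+\varepsilon$; for $n$ large enough $\tfrac1n\log f_n^{(i)}$ inherits these signs, which forces $|s_{n,B}(M)-s_B(M)|\leq\varepsilon$. For $\lim_{M\to\infty}s_B(M)=s_B$ and its $g$-analogue, I would invoke Proposition \ref{l5}, which gives $\mathsf{P}(T,\psi_i)=\sup_M\mathsf{P}_{\{1,\dots,M\}}(T,\psi_i)$; since each summand is decreasing in $s$ and the zeros $s_B(M)$ form an increasing sequence, their limit is the unique zero of the supremum, namely $s_B$.

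Continuity of $s_B$ in $B$ (and $g_{B_1,B_2}$ in $(B_1,B_2)$) follows because, for each finite $M$, the pressure $\mathsf{P}_{\{1,\dots,M\}}(T,\psi_1)$ is jointly analytic in $(s,B)$ since the potential is linear in $\log B$. The implicit function theorem gives continuity of $s_B(M)$ in $B$, and standard tail estimates $\sum_{a>M}a^{-2s}\to 0$, uniform on compact subsets of $(1/2,\infty)$, transfer this continuity to the limit $s_B$. Finally, at $B=1$ the potential reduces to $-s\log|T'|$, for which Theorem \ref{Bowen-Ruelle Theorem} gives $s_B=\hdim[0,1)=1$; for $B\to\infty$, I would write $\mathsf{P}(T,\psi_1)=-s\log B+\mathsf{P}(T,-s\log|T'|)$ and use that $\mathsf{P}(T,-s\log|T'|)$ is finite precisely for $s>1/2$ while diverging to $+\infty$ as $s\downarrow 1/2$; this forces the zero $s_B$ of the left-hand side to tend to $1/2$.

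The main obstacle is the last step: to conclude $s_B\to 1/2$ rigorously, one needs a quantitative lower bound on $\mathsf{P}(T,-s\log|T'|)$ as $s\downarrow 1/2$, obtained from the elementary comparison $\sum_a(aq_{n-1}+q_{n-2})^{-2s}\asymp q_{n-1}^{-2s}\sum_aa^{-2s}$ coupled with the divergence of $\sum_aa^{-1}$ at $s=1/2$. The implicit-function step in the continuity part and the sign-based passage to the limit in the first two parts are routine once the pressure framework of Subsection \ref{Pressure Functions} is in place.
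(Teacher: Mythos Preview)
Your argument is correct and is exactly the content the paper leaves implicit: its proof is a one-liner, asserting that the four limits and the continuity are ``consequences of Proposition~\ref{l5}'' and that the two limits in $B$ follow ``similar steps as for $s_B$ in \cite{WaWu08}''.  You have simply written out those steps---the sign argument passing from $\tfrac1n\log f_n^{(i)}$ to the pressure, the monotone-supremum use of Proposition~\ref{l5} for $M\to\infty$, and the additive splitting $\mathsf P(T,\psi_1)=-s\log B+\mathsf P(T,-s\log|T'|)$ combined with the divergence of $\mathsf P(T,-s\log|T'|)$ at $s=1/2$---so there is no substantive difference in approach.  One cosmetic point: Theorem~\ref{Bowen-Ruelle Theorem} as stated is for $C^{1+\alpha}$ expanding maps on compact sets, so applying it verbatim to the Gauss map is a slight abuse; the fact $\mathsf P(T,-\log|T'|)=0$ in the infinite-alphabet setting is rather the Mauldin--Urba\'nski version \cite{MaUr96,MaUr99}, but the conclusion is unchanged.
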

\begin{proof}
The last two equations can be proved by following similar steps as for $s_{B}$ in \cite{WaWu08} and others are consequences of Proposition \ref{l5}.
\end{proof}

\section{Hausdorff dimension of $E(A_1,A_2)$.}\label{cantor}

The proof of Theorem \ref{dim E(A_1,A_2)}  consists of two parts, the upper bound and the lower bound.  
For notational simplicity, we take $c_1=c_2=1$ and the other case can be done with obvious modifications. That is we will deal with the set
$$E(A_1,A_2) \defeq \left\{ x\in[0,1): \,  A_1^n\leq a_n(x) <2A_1^n, \, \, A_2^n \leq a_{n+1}(x)<2  A_2^n,\,\, \text{for i.m. } n\in\N \right\}.$$
\subsection{Upper bound.} It is known that the covering of an upper limit set is natural, so the natural covering of $E(A_1,A_2)$ is found routinely as follows. 
\begin{align*}
E(A_{1})&=\bigcap\limits_{N=1}^{\infty}  \bigcup\limits_{n=N}^{\infty} \left\{ x\in[0,1): \,  A_1^n\leq a_n(x) <2A_1^n, \, \, A_2^n \leq a_{n+1}(x)<2  A_2^n \right\}\\
&\subset\bigcup\limits_{n=N}^{\infty} \left\{ x\in[0,1): \,  A_1^n\leq a_n(x) <2A_1^n, \, \,  A_2^n \leq a_{n+1}(x)<2  A_2^n \right\}:=\bigcup\limits_{n=N}^{\infty} E_{n}\\
&=\bigcup\limits_{n=N}^{\infty}  \bigcup\limits_{a_1,\ldots,a_{n-1}\in\N} \left\{ x\in[0,1)\, : \, a_i(x)=a_i,1\leq i<n, \   A_1^n\leq a_n(x) <2A_1^n, \ A_2^n \leq a_{n+1}(x)<2  A_2^n \right\}.
\end{align*}

There are two potential optimal covers for $E_{n}$ for each $n\ge N$. 
For any integers $a_1,\ldots,a_{n-1}\in\N$, define
$$
J_{n-1}(a_1,\ldots,a_{n-1})=  \bigcup\limits_{A_1^n\leq a_n <2A_1^n } I_n(a_1,\ldots,a_n).
$$
That is one type of suitable coverings of $E(A_{1})$, which is specifically represented as
$$E(A_{1})\subset\bigcup\limits_{n=N}^{\infty}  \bigcup\limits_{a_1,\ldots,a_{n-1}\in\N}J_{n-1}(a_1,\ldots,a_{n-1}).$$
Then, by using Proposition \ref{pp3} and Proposition \ref{pp2} recursively, we obtain
$$
|J_{n-1}(a_1,\ldots,a_{n-1})| =  \sum\limits_{A_1^n\leq a_n <2A_1^n } \left| \frac{p_n}{q_n} - \frac{p_n+p_{n-1}}{q_n+q_{n-1}}\right| \asymp \frac{1}{A_1^n q_{n-1}^2}.
$$
Therefore, for any $\varepsilon>0$, an $(s_{A_{1}}+2\varepsilon)$-dimensional Hausdorff dimension of $E(A_1,A_2)$ can be estimated as
\begin{equation*}
\begin{split}
\HH^{s_{A_{1}}+2\varepsilon}(E(A_1,A_2))& \leq \liminf_{N\to\infty}   \sum\limits_{n=N}^\infty   \sum\limits_{a_1,\ldots,a_{n-1}}  |J_{n-1}(a_1,\ldots,a_{n-1})|^{s_{A_{1}}+2\varepsilon} \\
& \leq  \liminf_{N\to\infty}   \sum\limits_{n=N}^\infty  \frac{1}{2^{(n-1)\varepsilon}} \sum\limits_{a_1,\ldots,a_{n-1}} \left(\frac{1}{A_1^{n} q_{n-1}^{2}}\right)^{s_{A_{1}}}\\
&\le \liminf_{N\to\infty}   \sum\limits_{n=N}^\infty  \frac{1}{2^{(n-1)\varepsilon}}<\infty. 
\end{split}
\end{equation*}
Hence, from the definition of Hausdorff dimension, it follows that 
$$
\dim_H E(A_1,A_2) \leq s_{A_1}.
$$

As for another covering type of $E(A_{1})$, similar with the former, for any integers $a_1,\ldots,a_{n-1}\in\N$ and $A_1^n\leq a_n(x) <2A_1^n$, define
$$
J_{n}(a_1,\ldots,a_{n})=  \bigcup\limits_{A_2^n \leq a_{n+1}<2  A_2^n } I_{n+1} (a_1,\ldots,a_{n+1})
$$
Then
$$
|J_n(a_1,\ldots,a_n) | \asymp  \frac{1}{A_2^n q_{n}^2},
$$
and
$$
E(A_{1}) \subset \bigcup_{n=N}^{\infty} \bigcup\limits_{a_1,\ldots,a_{n-1}}   \bigcup\limits_{ A_1^n\leq a_n <2A_1^n} J_{n}(a_1,\ldots,a_{n}).
$$
Therefore, for any $\varepsilon>0$, a $(g_{(A_1 A_2),A_1}+2\varepsilon)$-dimensional Hausdorff dimension of $E(A_1,A_2)$ can be estimated as
\begin{equation*}
\begin{split}
\HH^{g_{(A_1 A_2),A_1}+2\varepsilon}(E(A_1,A_2))& \leq \liminf_{N\to\infty}   \sum\limits_{n=N}^\infty   \sum\limits_{a_1,\ldots,a_{n-1}}   \sum\limits_{ A_1^n\leq a_n <2A_1^n} |J_{n}(a_1,\ldots,a_{n})|^{g_{(A_1 A_2),A_1}+2\varepsilon} \\
& \leq  \liminf_{N\to\infty}   \sum\limits_{n=N}^\infty   \sum\limits_{a_1,\ldots,a_{n-1}}   \sum\limits_{ A_1^n\leq a_n <2A_1^n} \left(\frac{1}{A_2^{n} q_{n}^{2}}\right)^{g_{(A_1 A_2),A_1}+2\varepsilon}\\
&\le \liminf_{N\to\infty}   \sum\limits_{n=N}^\infty   \sum\limits_{a_1,\ldots,a_{n-1}}   A_1^n \left(\frac{1}{(A_2A_1^2)^{n} q_{n-1}^{2}}\right)^{g_{(A_1 A_2),A_1}+2\varepsilon}\\
&\le \liminf_{N\to\infty}   \sum\limits_{n=N}^\infty  \frac{1}{2^{(n-1)\varepsilon}}<\infty. 
\end{split}
\end{equation*}
The third inequality holds by recursive relation (\ref{recu}). Thus, the upper bound  is obtained immediately by
$$
\dim_H E(A_1,A_2) \leq   g_{(A_1 A_2),A_1}.
$$
Hence, 
$$
\dim_H E(A_1,A_2) \leq \min \left\{ s_{A_1} , g_{(A_1 A_2),A_1} \right \}.
$$
\subsection{Lower bound.} In this subsection we will determine the lower bound for the dimension of $E(A_1,A_2)$ by using the mass distribution principle (Proposition \ref{p1}).


For convenience, let us define some dimensional numbers in first. For any integers $N,M$, define the dimensional number $s=s_N(M)$ and $g=g_N(M)$ repectively as the solution to
\begin{equation*}\label{def s_n}
\sum\limits_{1\leq a_1,\ldots,a_{N}\leq M} \left( \frac{1}{A_1^{N} q_{N}^{2}}\right)^s=1
\end{equation*}
and
\begin{equation*}\label{def g_n}
\sum\limits_{1\leq a_1,\ldots,a_{N}\leq M} \frac{A_1^N}{((A_1^2 A_2)^{N} q_{N}^{2})^g}=1.
\end{equation*}
More specifically, each equation has a unique solution and, by Corollary \ref{cor2.1}, 
$$
\lim\limits_{M\to\infty} \lim\limits_{N\to\infty} s_N(M) = s_{A_1},
$$
and
$$
\lim\limits_{M\to\infty} \lim\limits_{N\to\infty} g_N(M) = g_{(A_1 A_2),A_1}.
$$
Take a sequence of large sparse integers $\{\ell_k\}_{k\geq1}$, say, $\ell_k \gg e^{\ell_1+\dots+\ell_{k-1}}.$ For any $\varepsilon>0$, choose integers $N,M$ sufficiently large such that
$$
s> s_{A_1}-\varepsilon, \quad g>g_{(A_1 A_2),A_1}-\varepsilon, \qquad \left(2^{(N-1)/2})\right)^{\varepsilon/2}\geq 2^{100}.
$$
Without loss of generality, we assume that $s<g$. Let $$n_k-n_{k-1}=\ell_kN+1, \, \forall k\geq1,$$
such that
$$\left( 2^{\ell_k(N-1)/2} \right)^{\frac{\varepsilon}{2}} \ge  \prod\limits_{t=1}^{k-1} (M+1)^{\ell_t N}(A_1A_2)^{\sum_{i=1}^{t}\ell_{i}N+t}.$$
At this point, define a subset of $E(A_1,A_2)$ as
\begin{align}
E =  \{ x\in[0,1): \,  A_1^{n_k}\leq a_{n_k}(x) <2A_1^{n_k}, \  &  A_2^{n_k} \leq a_{n_k+1}<2  A_2^{n_k} \text{ for all } k\geq1 \label{mainsubset}\\
& \text{ and } a_n(x)\in\{1,\ldots,M\} \text{ for other } n\in\N \}.\notag
\end{align}

Next we proceed to make use of a symbolic space.  Define $D_0=\{\emptyset\}$, and for any $n\geq1$, define 
 \begin{align*}
    D_n=\Bigg\{(a_1,\cdots, a_n)\in \N^n: A_i^{n_k}\le a_{n_k+i}&<2A_i^{n_k}, \ {\text{for all}} \ 0\le i\le 1, k\ge 1 \ {\text{with}} \ {n_k+i}\le n;\\ &{\text{and}}\ a_j\in \{1,\cdots, M\}, \ {\text{for other $j\le n$}}\Bigg\}.
  \end{align*} This set is just the collection of the prefixes of the points in $E$.
Moreover, the collection of finite words of length $N$ is denoted by
$$
\UU = \{ w= (\sigma_1,\dots, \sigma_N): 1\leq \sigma_i \leq M, 1\leq i\leq N\}
$$
and, in the rest of the paper we always use $w$ to denote a generic word in $\UU$. 

\subsubsection{Cantor structure of $E$.}
In this subsection, we depict the structure of $E$ with the help of symbolic space as mentioned above. For any $(a_1,\cdots, a_n)\in D_n$, define $$
J_n(a_1,\cdots,a_n)=\bigcup_{a_{n+1}: (a_1,\cdots,a_n, a_{n+1})\in D_{n+1}}I_{n+1}(a_1,\cdots,a_n, a_{n+1})
$$ and call it a {\em basic cylinder} of order $n$. More precisely, for any $k\ge 0$\begin{itemize}

\item when $n_k+1\le n<n_{k+1}-1$ (by viewing $n_0=0$), $$
J_n(a_1,\cdots,a_n)=\bigcup_{1\le a_{n+1}\le M}I_{n+1}(a_1,\cdots,a_n, a_{n+1}).
$$
\item when $n=n_{k+1}-1$  and $n=n_{k+1}$, for $i=1,2$,  $$
J_n(a_1,\cdots,a_n)=\bigcup_{A_i^{n_{k+1}}\le a_{n+1}< 2 A_i^{n_{k+1}}}I_{n+1}(a_1,\cdots,a_n, a_{n+1}).
$$
\end{itemize}
Then we define the level $n$ of the Cantor set $E$ as
$$
\mathcal{F}_n=\bigcup_{(a_1,\cdots,a_n)\in D_n}J_n(a_1,\cdots,a_n).
$$ 
Consequently, the Cantor structure of $E$ is described as follows
$$
E=\bigcap_{n=1}^{\infty}\mathcal{F}_n=\bigcap_{n=1}^{\infty}\bigcup_{(a_1,\cdots,a_n)\in D_n}J_n(a_1,\cdots,a_n).
$$

We observe that every element $x\in E$ can be written as \begin{align*}
x=[w_1^{(1)},\cdots, w_{\ell_1}^{(1)}, a_{n_1}, a_{n_1+1}, & w_1^{(2)},\cdots, w_{\ell_2}^{(2)}, a_{n_2},a_{n_2+1},
 \cdots,  w_1^{(k)},\cdots, w_{\ell_k}^{(k)}, a_{n_k},a_{n_k+1},\cdots],
\end{align*} where $$
w^{(p)}_k\in \UU, \ {\text{and}}\ \ A_{i+1}^{n_k}\le a_{n_k+i}\le 2A_{i+1}^{n_k}, \ \text{ for all }  k,p\in\mathbb{N} \ \ 0\le i\le 1.
$$
Then the length of cylinder set can be estimated as follows.
\begin{lemma}[Length estimation] Let $x\in E$ and $n_k+1 \leq n < n_{k+1}-1$.
\begin{itemize}

\item for $n=n_{k}-1$,  \begin{align}\label{ff8}|J_{n_{k}-1}(x)| & \ge \frac{1}{2^3A_1^{n_{k}}}\cdot \left(\frac{1}{2^{\ell_{k}}}\cdot \prod_{i=1}^{\ell_{k}}\frac{1}{q_N(w_i^{(k)})}\cdot \frac{1}{q_{n_{k-1}+1}}\right)^2  
 \ge  \frac{1}{A_1^{n_{k}}} \left( \prod_{i=1}^{\ell_{k}}\frac{1}{q_N(w_i^{(k)})}\right)^{2(1+\varepsilon)}.
\end{align}
    
\item for $n=n_{k}$,
\begin{align}\label{ff9}|J_{n_{k}}(x)|\ge \frac{1}{2^{11}}\cdot \frac{1}{A_1^{n_{k}}A_2^{n_{k}}}\cdot |J_{n_{k}-1}(x)|.\end{align}
\item for $n=n_{k}+1$, \begin{align}\label{ff10}|J_{n_{k}+1}(x)|\ge \frac{1}{2^{11}}\cdot \frac{1}{A_1^{n_{k}}A_2^{2n_{k}}}\cdot |J_{n_{k}-1}(x)|.
\end{align}

  \item For each $1\le \ell<\ell_{k+1}$,
 \begin{align}\label{ff12}|J_{n_k+1+\ell N}(x)|&\ge \frac{1}{2^3}\cdot \left(\frac{1}{2^{2\ell}}\cdot \prod_{i=1}^{\ell}\frac{1}{q_N^2(w_i^{(k+1)})}\right)\cdot \frac{1}{q^2_{{n_k}+1}} \ge  \left( \prod_{i=1}^{\ell}\frac{1}{q_N^2(w_i^{(k+1)})}\right)^{1+\varepsilon}\cdot \frac{1}{q^2_{{n_k}+1}}. \end{align}

\item for $n_k+1+(\ell-1)N<n<n_k+1+\ell N$ with $1\le \ell\le \ell_{k+1}$, \begin{align}\label{ff13}
|J_{n}(x)|\ge c\cdot |J_{n_k+1+(\ell-1)N}(x)|,
\end{align} where $c=c(M, N)$ is an absolute constant.
\end{itemize}
\end{lemma}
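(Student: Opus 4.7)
The plan is to treat each of the five bullets by case analysis built on two basic tools: the length formula $|I_n(a_1,\ldots,a_n)| \asymp 1/q_n^2$ from Proposition \ref{pp3}($\mathrm{P}_1$) combined with the monotone arrangement of sub-cylinders in $a_{n+1}$ from Proposition \ref{pp2}, and the sub-multiplicative bound $q_{n+m}(a_1,\ldots,a_n,b_1,\ldots,b_m) \in [q_n q_m,\, 2 q_n q_m]$ from ($\mathrm{P}_2$). The key observation is that each $J_n$ is a disjoint union of consecutive $I_{n+1}$-cylinders, so by Proposition \ref{pp2} its total length equals the gap between the outer endpoints, which collapses explicitly via the cofactor identity $p_{n-1}q_n - p_nq_{n-1} = (-1)^n$.

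For the first case ($n=n_k-1$) the union runs over $A_1^{n_k}\le a_{n_k}<2A_1^{n_k}$, which after the endpoint computation yields $|J_{n_k-1}(x)| \asymp 1/(A_1^{n_k}\, q_{n_k-1}^2)$. I would then split $q_{n_k-1}$ via ($\mathrm{P}_2$) across the blocks $w_1^{(k)},\ldots,w_{\ell_k}^{(k)}$ and the prefix ending at position $n_{k-1}+1$, paying a multiplicative slack of at most $2^{\ell_k}$; this gives the first inequality in \eqref{ff8}. The second inequality follows by invoking the sparsity of $\ell_k$ so that the factors $2^{\ell_k}$ and $q_{n_{k-1}+1}^2$ are absorbed into the $(1+\varepsilon)$-th power, using precisely the calibration $\bigl(2^{\ell_k(N-1)/2}\bigr)^{\varepsilon/2} \geq \prod_{t<k}(M+1)^{\ell_t N}(A_1A_2)^{\sum \ell_i N + t}$ fixed earlier. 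The second and third bullets then follow by the same template: compute $|J_{n_k}(x)|$ and $|J_{n_k+1}(x)|$ for the ranges $a_{n_k+1} \in [A_2^{n_k},2A_2^{n_k})$ and $a_{n_k+2}\in\{1,\ldots,M\}$, and compare to $|J_{n_k-1}(x)|$ using the telescoping estimates $q_{n_k}\leq 4 A_1^{n_k} q_{n_k-1}$ and $q_{n_k+1}\leq 4 A_2^{n_k} q_{n_k}$, which produce the ratios $A_1^{-n_k}A_2^{-n_k}$ and $A_1^{-n_k}A_2^{-2n_k}$ up to the $2^{11}$ constant.

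The fourth bullet is parallel but inside a block region: for $n = n_k+1+\ell N$ the union runs over $a_{n+1}\in\{1,\ldots,M\}$, and a routine estimate gives $|J_n(x)| \asymp c(M)/q_n^2$ with an $M$-dependent constant. Factoring $q_n$ via ($\mathrm{P}_2$) through $q_{n_k+1}$ and the $\ell$ words $w_i^{(k+1)}$ gives the first inequality, and the sparsity-absorption argument again promotes the exponent to $1+\varepsilon$. For the last bullet, when $n$ lies strictly between two block boundaries, both $J_n$ and $J_{n_k+1+(\ell-1)N}$ are basic cylinders of the same form, so their ratio is controlled by $q_{n_k+1+(\ell-1)N}^2 / q_n^2$. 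Since the intermediate partial quotients are bounded by $M$ and number fewer than $N$, ($\mathrm{P}_2$) together with ($\mathrm{P}_3$) gives $q_n \leq 2(M+1)^N\, q_{n_k+1+(\ell-1)N}$, producing the required absolute constant $c = c(M,N)$.

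The main obstacle I anticipate is the constant bookkeeping rather than any conceptual difficulty: each concatenation invokes a factor of $2$ from ($\mathrm{P}_2$), and these compound across $\ell_k$ blocks; they must be absorbed cleanly via the growth condition $\ell_k \gg e^{\ell_1+\cdots+\ell_{k-1}}$ and the prior $\varepsilon$-calibration. Once these absorption estimates are in place, each of the five bullets reduces to a direct explicit computation with the cylinder endpoints furnished by ($\mathrm{P}_1$) and Proposition \ref{pp2}.
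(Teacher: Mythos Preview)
Your proposal is correct and follows essentially the same route as the paper: compute $|J_n(x)|$ explicitly from the endpoint formula for the consecutive union of sub-cylinders (Proposition~\ref{pp2} plus the cofactor identity), obtain the main-term estimates $|J_{n_k-1}(x)|\asymp 1/(A_1^{n_k}q_{n_k-1}^2)$ etc., then factor $q_n$ across the word blocks via $(\mathrm P_2)$ and absorb the accumulated powers of $2$ and the prefix $q_{n_{k-1}+1}$ into the $(1+\varepsilon)$-exponent using the sparsity calibration on $\ell_k$. Your treatment of the constant bookkeeping and the fifth bullet via $(\mathrm P_2)$--$(\mathrm P_3)$ with partial quotients bounded by $M$ over fewer than $N$ steps is exactly what the paper does (more tersely).
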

\begin{proof}
The proof of this lemma is based on the following inequality. Let $$V_{a}:=\max{a_{n+1}}, \qquad v_{a}:=\min{a_{n+1}}.$$ Then
\begin{align*}
|J_n(x)|& = \left| \frac{v_{a}p_n+p_{n-1}}{v_{a}q_n+q_{n-1}}-\frac{(V_{a}+1)p_n+p_{n-1}}{(V_{a}+1)q_n+q_{n-1}}\right|=\frac{V_{a}+1-v_{a}}{(v_{a}q_n+q_{n-1})((V_{a}+1)q_n+q_{n-1})},
\end{align*}
$$\Longrightarrow \frac{1}{2^{3}}\cdot\frac{V_{a}-v_{a}}{V_{a}v_{a}q_n^2}\le|J_n(x)|\le\frac{V_{a}+1-v_{a}}{V_{a}v_{a}q_n^2}.$$
The cases of $n=n_{k}-1$, $n=n_{k}$ and any others follows by replacing $V_{a}$ with $2A_{1}^{n_{k}}$, $2A_{2}^{n_{k}}$, $M$ and $v_{a}$ with $A_{1}^{n_{k}}$, $A_{2}^{n_{k}}$, $1$ respectively. Then using subtle scale by the choice of $\ell_{k}$ is enough. More precisely, we give the main terms here and omit straightforward substitutions. 
\begin{align*}
\frac{1}{A_1^{n_k} q_{n_k-1}^2}  > &|J_{n_k-1}(x)|  \geq \frac{1}{8} \frac{1}{A_1^{n_k} q_{n_k-1}^2},\\
\frac{1}{(A_2 A^2_1)^{n_k} q_{n_k-1}^2} \geq \frac{1}{A_2^{n_k} q_{n_k}^2}  > &|J_{n_k}(x)|  \geq \frac{1}{8} \frac{1}{A_2^{n_k} q_{n_k}^2}\geq \frac{1}{2^7}  \frac{1}{(A_2 A^2_1)^{n_k} q_{n_k-1}^2}.
\end{align*}
For $n\neq n_{k}-1,n_{k}$,
$$\frac{1}{q_{n}^{2}}>|J_n(x)|\geq \frac{1}{8q_n^2}.$$
\end{proof}

\subsection{Mass distribution}\
In this subsection, we define two mass distributions along the basic intervals $J_n(x)$ containing $x$. These mass distributions then can be extended respectively into probability measure supported on $E$ by the Carath\'eodory extension theorem. Let us begin this idea by induction. For $n\le n_1+1$, 
\begin{enumerate}
  \item when $n=\ell N$ for each $1\le \ell\le \ell_1$, define $$
  \mu_1(J_{\ell N}(x))=\prod_{i=1}^{\ell}\frac{1}{q_N(w_i^{(1)})^{2s}\cdot A_1^{sN}}$$
and
$$
  \mu_2(J_{\ell N}(x))=\prod_{i=1}^{\ell}\frac{A_1^N}{q_N(w_i^{(1)})^{2g}\cdot  (A_2A_1^2)^{gN}}.
  $$
  We note that measures $\mu_1$ and $\mu_2$ can be defined on all basic cylinders of order $\ell N$ since  $x$ is arbitrary.  
  \item when $(\ell-1)N<n<\ell N$ for some $1\le \ell\le \ell_1$ and for all $1\le j\le 2$, define
  $$
  \mu_j(J_n(x))=\sum_{J_{\ell N}\subset J_n(x)}\mu_j(J_{\ell N}(x))
  $$ The consistency property as mentioned above fulfills the measure of other basic intervals of order less than $n_{1}-1$.
        \item when $n=n_{1}+i$ for each $0\le i\le 1$ and $1\le j\le 2$, define $$
  \mu_j(J_{n_1+i}(x))=\prod_{k=0}^i\frac{1}{A_k^n}\cdot \mu_j(J_{n_1-1}(x)).
  $$
\end{enumerate}

Assume the measure of all basic intervals of order $n_{k}+1$ has been defined when $n_{k}+1<n\le n_{k+1}+1$. 
\begin{enumerate}
  \item  When $n=n_{k}+1+\ell N$ for each $1\le \ell\le \ell_{k+1}$, define \begin{align}\label{ff7}
  \mu_1(J_{n_{k}+1+\ell N}(x))&=\prod_{i=1}^{\ell}\frac{1}{q_N(w_i^{(k+1)})^{2s}\cdot A_1^{sN}}\cdot \mu_1(J_{n_{k}+1}(x))
  \end{align}
and
\begin{align}\label{ff777}
  \mu_2(J_{n_{k}+1+\ell N}(x))=\prod_{i=1}^{\ell}\frac{A_1^N}{q_N(w_i^{(k+1)})^{2g}\cdot  (A_2A_1^2)^{gN}}\mu_2(J_{n_{k}+1}(x)) .
  \end{align}
  
  \item When $n_k+1+(\ell-1)N<n<n_k+1+\ell N$ for some $1\le \ell\le \ell_1$ and for $1\le j\le 2$, define
  $$
  \mu_j(J_n(x))=\sum_{J_{n_k+1+\ell N}\subset J_n(x)}\mu_j(J_{n_k+1+\ell N}).
  $$
Furthermore, for each measure, compared with the measure of a basic cylinder of order $n_k+1+(\ell-1) N$ and its offsprings of order $n_k+1+\ell N$, there is only a multiplier between them. More precisely, that is the term $$
\frac{1}{q_N^{2s}(w_{\ell}^{(k+1)})A_1^{sN}}
$$ in the case of $\mu_1$ and
$$
\frac{A_1^N}{q_N^{2g}(w_{\ell}^{(k+1)})(A_2A_1^2)^{gN}}
$$ in the case of $\mu_2$. Thus for each $1\le j\le2$, there is an absolute constant $c>0$ such that
$$\mu_j(J_n(x))\ge c\cdot \mu_j\Big(J_{n_k+1+(\ell-1)N}(x)\Big)$$
since the above two terms are uniformly bounded.  
  \item when $n=n_{k}+1+i$ for each $0\le i\le 1$ and $1\le j\le 2$, define 
\begin{align}\label{ff11}
  \mu_j(J_{n_{k}+1+i}(x))&=\prod_{j=0}^i\frac{1}{A_j^{n_{k}}}\cdot \mu_j(J_{n_{k}-1}(x)).
  \end{align}

  \item As for other orders of measure, to ensure the consistency property, let their measure equal to the summation of the measure of their offsprings. Moreover, for each integer $n$, the relation between measures of a basic cylinder and its predecessor acts like the case $n_k+1+(\ell-1)N<n<n_k+1+\ell N$, for each $1\le j\le 2$, there is a constant $c>0$ such that 
\begin{align}\label{ff2}
\mu_j(J_{n+1}(x))\ge c\cdot \mu_j(J_n(x)).
\end{align}
\end{enumerate}

\subsection{H\"{o}lder exponent of $\mu$ for basic cylinders}\
We compare the measure and length of $J_{n}(x)$. However, different measures for different values of parameters are considered as the following three cases:

\begin{enumerate}[label=\roman*)]
\item $A_1 \ge (A_1A_2)^s$ .
\item $(A_1A_2)^g < A_1 < (A_1A_2)^s $.
\item $A_1 \le  (A_1A_2)^g$.
\end{enumerate}

We will use measure $\mu_1$ for the first and the second cases, while we use $\mu_2$ for the third case. 

\begin{enumerate}
\item When $n={n_{k}-1}$.
Recall (\ref{ff7}) and \eqref{ff8}, it follows that for $\mu_{1}$,
\begin{align*}
\mu_1(J_{n_{k}-1}(x))&\le \frac{1}{ A_1^{sn_k}} \prod_{i=1}^{\ell_k}\frac{1}{q_N(w_i^{(k)})^{2s}} \le |J_{n_{k}-1}(x) |^{\frac{s}{1+\varepsilon}} \le 
\left(\frac{1}{A_1^{n_k} q_{n_k-1}^2} \right)^{\frac{s}{1+\varepsilon}} \le  \left(\frac{1}{A_1^{n_k} q_{n_k-1}^2} \right)^\frac{{\min\{s,g\}}}{1+\varepsilon}  .\end{align*}
For the measure $\mu_2$ recall \eqref{ff777} and \eqref{ff8}. It follows that
\begin{align*}
\mu_2(J_{n_{k}-1}(x))\le  \frac{A_1^{n_k}}{(A_1^2 A_2)^{g n_k}} \prod_{i=1}^{\ell_k}\frac{1}{q_N(w_i^{(k)})^{2g}} &\le  \frac{1}{A_1^{g n_k}} \prod_{i=1}^{\ell_k}\frac{1}{q_N(w_i^{(k)})^{2g}} \\
&  \le|J_{n_{k}-1}(x) |^{\frac{g}{1+\varepsilon}}  \le|J_{n_{k}-1}(x) |^\frac{{\min\{s,g\}}}{1+\varepsilon}.
\end{align*}

\item When $n={n_{k}}$.
Recall (\ref{ff11}) and \eqref{ff9}, for $\mu_{1}$ we get 
\begin{align*}
\mu_1(J_{n_{k}}(x))= \frac{1}{A_1^{n_{k}}}\cdot \mu_1(J_{n_{k}-1}(x)) &  \le  \frac{1}{A_1^{n_{k}}}\cdot \left(\frac{1}{A_1^{n_k} q_{n_k-1}^2} \right)^\frac{{\min\{s,g\}}}{1+\varepsilon}  \le  \left(  \frac{1}{(A_2 A^2_1)^{n_k} q_{n_k-1}^2}\right)^\frac{{\min\{s,g\}}}{1+\varepsilon}  \\
 &  \le c |J_{n_k}(x) |^\frac{{\min\{s,g\}}}{1+\varepsilon}  \le c \cdot  \left( \frac{1}{A_2^{n_k} q_{n_k}^2} \right)^\frac{{\min\{s,g\}}}{1+\varepsilon}.
\end{align*} 
For the measure $\mu_2$ we have
\begin{align*}
\mu_2(J_{n_{k}}(x))= \frac{1}{A_1^{n_{k}}}\cdot \mu_2(J_{n_{k}-1}(x)) &  \le  \frac{1}{A_1^{n_{k}}}\cdot  \frac{A_1^{n_k}}{(A_1^2 A_2)^{g n_k}} \prod_{i=1}^{\ell_k}\frac{1}{q_N(w_i^{(k)})^{2g}} \le  \left(  \frac{1}{(A_2 A^2_1)^{n_k} q_{n_k-1}^2}\right)^\frac{g}{1+\varepsilon}  \\
 &  \le c |J_{n_k}(x) |^\frac{g}{1+\varepsilon}  \le c \cdot  \left( \frac{1}{A_2^{n_k} q_{n_k}^2} \right)^\frac{g}{1+\varepsilon} \le  c \cdot  \left( \frac{1}{A_2^{n_k} q_{n_k}^2} \right)^\frac{{\min\{s,g\}}}{1+\varepsilon}  .
\end{align*}

\item When $n=n_{k}+1$. Recall \eqref{ff11} and \eqref{ff10}. Note that $0\le \min\{s,g\} \le 1$, for each $1\le j\le 2$,
\begin{align*}
  \mu_j(J_{n_{k}+1}(x))=\frac{1}{A_2^{n_{k}}}\cdot \mu_j(J_{n_{k}}(x))&\le \frac{1}{A_2^{n_{k}}} \cdot c \cdot  \left( \frac{1}{A_2^{n_k} q_{n_k}^2} \right)^\frac{{\min\{s,g\}}}{1+\varepsilon}\le c  \left( \frac{1}{A_2^{2n_k} q_{n_k}^2} \right)^\frac{{\min\{s,g\}}}{1+\varepsilon} \\ &\le c_2 |J_{n_k+1} (x)|^\frac{{\min\{s,g\}}}{1+\varepsilon} \le c_2  \left( \frac{1}{ q_{n_k+1}^2} \right)^\frac{{\min\{s,g\}}}{1+\varepsilon}.
\end{align*}

\item When $n=n_k+1+\ell N$ for some $1\le \ell<\ell_{k+1}$. Recall (\ref{ff7}), \eqref{ff777} and \eqref{ff12}, for each $1\le j\le 2$,
\begin{align*}
\mu_j(J_{n_k+1+\ell N}(x))
  \le  c_2   \prod_{i=1}^{\ell}\frac{1}{q_N(w_i^{(k+1)})^{2s}}  \left( \frac{1}{ q_{n_k+1}^2} \right)^\frac{{\min\{s,g\}}}{1+\varepsilon}\le c_2 | J_{n_k+1+\ell N}(x)|^\frac{{\min\{s,g\}}}{1+\varepsilon}. 
\end{align*}

\item For other $n$, let $1\le \ell\le \ell_k$ be the integer such that $$
n_k+1+(\ell-1)N\le n<n_k+1+\ell N.
$$
Recall \eqref{ff13}. Then for each $1\le j\le 2$,
\begin{align*}
  \mu_j(J_n(x))&\le \mu_j(J_{n_k+1+(\ell-1)N}(x)) \le c_2 \cdot \big|J_{n_k+2+(\ell-1)N}(x)\big|^\frac{{\min\{s,g\}}}{1+\varepsilon}\le c_2 \cdot c \cdot \big|J_{n}(x)\big|^\frac{{\min\{s,g\}}}{1+\varepsilon}.
\end{align*}
\end{enumerate}

In a summary, we have shown that for some absolute constant $c_3$, for any $n\ge 1$ and $x\in E$, \begin{align}\label{g1}
  \mu_j(J_n(x))\le c_3\cdot |J_n(x)|^\frac{{\min\{s,g\}}}{1+\varepsilon}.
\end{align}

\subsection{H\"{o}lder exponent for a general ball}

For simplicity, write
$$
\tau= \frac{{\min\{s,g\}}}{1+\varepsilon}.
$$
The next lemma gives a minimum gap between two adjacent fundamental cylinders.
\begin{lemma}[Gap estimation]\label{l3} Denote by $G_n(a_1,\cdots, a_n)$ the gap between $J_n(a_1,\cdots, a_n)$ and other basic cylinders of order $n$. Then $$
G_n(a_1,\cdots, a_n)\ge \frac{1}{M}\cdot |J_n(a_1,\cdots,a_n)|.
$$
\end{lemma}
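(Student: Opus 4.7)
The plan is to bound $G_n(a_1,\dots,a_n)$ from below by the length of an unused ``tail'' sub-interval of some cylinder $I_n$ that must lie between $J_n(a_1,\dots,a_n)$ and any other basic cylinder of order $n$. Since distinct basic cylinders correspond to distinct admissible prefixes in $D_n$, and each $I_n(\mathbf{a})$ contains exactly one basic cylinder $J_n(\mathbf{a})$, every other basic cylinder of order $n$ lies in some $I_n(\mathbf{a}')$ with $\mathbf{a}'\neq\mathbf{a}$. By Proposition \ref{pp2} the positionally closest candidate is $J_n(a_1,\dots,a_{n-1},a_n\pm 1)$, lying in a neighbour of $I_n(\mathbf{a})$ inside their common parent $I_{n-1}(a_1,\dots,a_{n-1})$.

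Next, apply Proposition \ref{pp2} again inside $I_n(\mathbf{a})$: the sub-cylinders $I_{n+1}(\mathbf{a},a_{n+1})$ are arranged monotonically in $a_{n+1}$, so in the standard case $n_k+1\le n<n_{k+1}-1$ the basic cylinder $J_n(\mathbf{a})=\bigcup_{a_{n+1}=1}^{M} I_{n+1}(\mathbf{a},a_{n+1})$ sits at one end of $I_n(\mathbf{a})$, while the complementary part (corresponding to $a_{n+1}>M$) forms a tail at the opposite end. A parity check then shows that between $J_n(\mathbf{a})$ and its nearest neighbouring basic cylinder there lies the full tail of at least one of the cylinders $I_n(\mathbf{a})$ or $I_n(a_1,\dots,a_{n-1},a_n\pm 1)$, so $G_n(\mathbf{a})$ is at least this tail length.

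The tail length and $|J_n(\mathbf{a})|$ are computed explicitly via Proposition \ref{pp3} and the identity $p_{n-1}q_n-p_nq_{n-1}=(-1)^n$:
\begin{equation*}
\bigl|\text{tail}\bigr| = \left|\tfrac{p_n}{q_n}-\tfrac{(M+1)p_n+p_{n-1}}{(M+1)q_n+q_{n-1}}\right| = \frac{1}{q_n\bigl((M+1)q_n+q_{n-1}\bigr)},
\end{equation*}
\begin{equation*}
|J_n(\mathbf{a})| = \left|\tfrac{p_n+p_{n-1}}{q_n+q_{n-1}}-\tfrac{(M+1)p_n+p_{n-1}}{(M+1)q_n+q_{n-1}}\right| = \frac{M}{(q_n+q_{n-1})\bigl((M+1)q_n+q_{n-1}\bigr)},
\end{equation*}
so their ratio is $(q_n+q_{n-1})/(Mq_n)\ge 1/M$, which is the required bound. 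For the boundary indices $n\in\{n_{k+1}-1,n_{k+1}\}$, where the admissible range of $a_{n+1}$ is $[V,2V)$ with $V=A_i^{n_{k+1}}$, an analogous computation (with $V,2V$ in place of $1,M+1$) yields $|J_n(\mathbf{a})|\asymp 1/(Vq_n^2)$ and a gap of the same order on each side of $J_n(\mathbf{a})$, making the bound $|J_n|/M$ automatic.

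The one non-routine step is the parity/orientation bookkeeping: one must verify, using the left-to-right versus right-to-left convention of Proposition \ref{pp2}, that the relevant tail actually sits between $J_n(\mathbf{a})$ and its neighbouring basic cylinder, rather than being wasted on the ``outer'' side of $I_n(\mathbf{a})$. Once this is settled the lemma reduces to the elementary continued-fraction identities above.
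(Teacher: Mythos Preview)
Your approach is precisely the one the paper intends: the paper itself gives no argument beyond invoking Proposition~\ref{pp2} and citing \cite[Lemma~5.3]{HuWuXu}, so you have supplied the omitted details along the same lines.

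One point deserves tightening. Your displayed ratio $(q_n+q_{n-1})/(Mq_n)\ge 1/M$ compares the tail of $I_n(\mathbf{a})$ \emph{itself} with $|J_n(\mathbf{a})|$, and this handles the gap on one side of $J_n(\mathbf{a})$. But the parity check you allude to shows that on the \emph{other} side the separating interval is the tail of the neighbouring cylinder $I_n(a_1,\dots,a_{n-1},a_n\pm 1)$, whose denominators are $q_n'=q_n+q_{n-1}$ rather than $q_n$. That tail has length
\[
\frac{1}{q_n'\bigl((M+1)q_n'+q_{n-1}\bigr)},
\]
and dividing by $|J_n(\mathbf{a})|$ gives $\dfrac{(M+1)q_n+q_{n-1}}{M\bigl((M+1)q_n'+q_{n-1}\bigr)}$, which can drop below $1/M$ (take for instance $q_{n-1}=1$, $q_n=2$, $M=3$: the ratio is $9/39<1/3$). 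Thus your argument as written yields $G_n\ge c\,|J_n|$ with $c$ of order $1/(2M)$ rather than exactly $1/M$. This is entirely harmless for the subsequent H\"older-exponent estimate, where any constant depending only on $M$ suffices; the stated constant $1/M$ in the lemma should be read in that spirit.
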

\begin{proof}
  The proof of this lemma is derived from the positions of the cylinders in Proposition \ref{pp2}. We omit the details and refer the reader to its analogous proof in \cite[Lemma 5.3]{HuWuXu}.
\end{proof}

Then for any $x\in E$ and $r$ small enough, there is a unique integer $n$ such that 
$$G_{n+1}(x)\le r<G_{n}(x).$$ 
This implies that the ball $B(x,r)$ can only intersect one basic cylinder $J_n(x)$, and so all the basic cylinders of order $n+1$ for which $B(x,r)$ can intersect are all contained in $J_n(x)$. We discuss the measure of the ball with several cases. Note that $n_{k-1}+1\le n <n_{k}+1$. 
\begin{enumerate}
\item For $n_{k-1}+1\le n<n_{k}-1$, by (\ref{ff2}) and (\ref{g1}), it follows that for each $1\le j\le 2$,
\begin{align*}
\mu_j(B(x,r))&\le \mu_j(J_n(x))\le c\cdot \mu_j(J_{n+1}(x))\le c\cdot c_3\cdot \big|J_{n+1}(x)\big|^{\tau}\\&\le c\cdot c_3\cdot M\cdot (G_{n+1}(x))^{\tau}\le c\cdot c_3\cdot M\cdot  r^{\tau}.
\end{align*}

\item For $n=n_{k}-1$, the ball $B(x,r)$ can only intersect one basic cylinder $J_{n_k-1}(x)$ of order $n_k-1$. Next, the number of basic cylinders of order $n_{k}$ which are contained in $J_{n_k-1}(x)$ and intersect the ball can be calculated as follows.
 
We write $x=[a_1(x),a_2(x),\ldots]$ and observe that any basic cylinder $J_{n_{k}}(a_{1},\ldots,a_{n_k})$ is contained in the cylinder $I_{n_k}(a_{1},\ldots,a_{n_k})$. Note that $A_{1}^{n_{k}}\le a_{n_{k}}\le 2A_{1}^{n_{k}}$, the length of cylinder $I_{n_k}$ is
$$
\frac{1}{q_{n_k}(q_{n_k}+q_{n_k-1})}\ge \frac{1}{2^{5}}\cdot \frac{1}{q_{n_k-1}^2A_1^{2n_k}}.
$$
We also note that radius $r$ is sometimes too small to cover a whole cylinder of order $n_{k}$. The exposition needs to split into two parts. When 
$$r<\frac{1}{2^{5}} \frac{1}{q_{n_k-1}^2(u)A_1^{2n_k}},$$ 
then the ball $B(x,r)$ can intersect at most three cylinders $I_{n_k}(a_1,\ldots,a_{n_{k}})$ and so three basic cylinders $J_{n_k}(a_1,\ldots,a_{n_{k}})$. Since each measure has the same distribution on these intervals, for $1\le j\le 2$,
\begin{align*}
  \mu_j(B(x,r))&\le 3\mu_j(J_{n_k}(x))\le 3 \cdot c_3\cdot |J_{n_k}(x)|^{\tau}\\ 
&\le 3\cdot c_3\cdot M\cdot G_{n+1}(x)^{\tau}\le 3\cdot c_1\cdot M\cdot r^{\tau}.
  \end{align*}

When $$
  r\ge \frac{1}{2^{5}} \frac{1}{q_{n_k-1}^2A_1^{2n_k}},
  $$ then the number of basic cylinders of order $n_k$ for which the ball $B(x,r)$ can intersect is at most $$
  {2^{6}r}\cdot q_{n_k-1}^{2}A_1^{2n_{k}}+2\le 2^7\cdot {r}\cdot q_{n_k-1}^{2}A_1^{2n_{k}}.$$ 
Thus, for $1\le j\le 2$,
 \begin{align*}
    \mu_j(B(x,r))&\le \min\Big\{\mu_j(J_{n_k-1}(x)),\ \  2^7\cdot {r}\cdot q_{n_k-1}^{2}(u)A_1^{2n_{k}}\cdot \frac{1}{A_1^{n_k}}\cdot \mu_j(J_{n_k-1}(x))\Big\}\\
    &\le c_3\cdot |J_{n_k-1}|^{\tau}\cdot \min\Big\{1, 2^7\cdot {r}\cdot q_{n_k-1}^{2}(u)A_1^{n_{k}}\Big\}\\
    &\le c_3\cdot \left(\frac{1}{q_{n_k-1}(u)^2 A_1^{n_k}}\right)^{\tau}\cdot 1^{1-\tau}\cdot \Big(2^7\cdot {r}\cdot q_{n_k-1}^{2}(u)A_1^{n_{k}}\Big)^{\tau(1-\epsilon)}\\
    &=c_4 \cdot r^{\tau}.
  \end{align*}

\item When $n=n_k$. Replaced $n_k-1$ and $A_1$ in case (2) by $n_k$ and $A_2$ respectively can arrive the same conclusion with the same argument as in case (2).
\end{enumerate}
\subsection{Conclusion}
Thus by applying the mass distribution principle (Proposition \ref{p1}), it yields that
\begin{equation*}\label{dim min}
\dim_H E(A_1,A_2) \ge  \frac{{\min\{s,g\}}}{1+\varepsilon}.
\end{equation*}

Since $\varepsilon>0$ is arbitrary, by letting $N\to\infty$ as then $M\to\infty$, we arrive at
$$
\dim_H E(A_1,A_2) \geq \min \{s_{A_1}, \ g_{(A_1 A_2),A_1} \}.
$$
This finishes the proof.

\section{ Hausdorff dimension of $\FF_{B_1,B_2}$ }\label{specificsec}
\noindent
In this section we prove Theorem \ref{specific}. There are different possible cases for $B_1$ and $B_2$.
\subsection{ Case $B_1^{s_0} \le B_2$}
To get the upper bound for dimension of $\FF_{B_1,B_2}$ recall that for every $B_2$ 
$$
\FF_{B_1,B_2} \subset \mathcal{E}_2 = \{ x: a_n(x) a_{n+1}(x) \ge B_1^n \text{\,\, for i.m. \,\,} n\in\N \}.
$$
Hence, by the Theorem \ref{HWXthm}, we have  that
$$
\dim_H \FF_{B_1,B_2} \le s_0.
$$
To get a lower bound, recall a set \eqref{mainsubset} from Theorem \ref{dim E(A_1,A_2)} and let $A_1=B_1^{s_0}, A_1 A_2 = B_1$, so that we have a set

\begin{equation*}
E=\left\{x\in[0,1):
\begin{split} 
 &\begin{split}&c_1 B_1^{ns_0}\leq a_n(x) <2c_1 B_1^{ns_0}, \\ &c_2 B_1^{n(1-s_0)} \leq a_{n+1}(x)<2  c_2 B_1^{n(1-s_0)},\end{split}&\text{ for i.m. } n\in\N \\
&1\leq a_n(x) \leq M &\text{for all other\,\,} n\in\N
\end{split}
\right\}.
\end{equation*}
Since we have  $B_1^{s_0} \le B_2$ and $s_{0}\ge 1/2$ by Propositon 2.11 in \cite{HuWuXu}, one can easily see that $E$ is indeed a subset of $\FF_{B_1,B_2}$ and by the proof of Theorem \ref{dim E(A_1,A_2)} we know that 
$$
\dim_H E \ge  \min \{s_{B_1^{s_0}}, \ g_{B_1,B_1^{s_0}} \}  =  s_{B_1^{s_0}}  = s_0,
$$
so $\dim_H \FF_{B_1,B_2} \ge s_0.$
Hence, we get that when $B_1^{s_0} \le B_2$, we have $\dim_H \FF_{B_1,B_2} =s_0.$

\subsection{ Case  $B_1^{s_0} \ge B_2 > B_1^{1/2}$ }\label{subsection42}
For the upper bound a subset
$$
U_1 = \left\{ x: 1 \le a_n(x) \le B_2^n, \ a_{n+1}(x) \ge \frac{B_1^n}{a_n(x)} \text{\,\, for i.m. \,}  n\in\N \right\}
$$
of $  \FF_{B_1,B_2}.$ This set can be covered by collections of basic cylinders $J_n$ of order $n$:
\begin{align*}
U_1 &= \bigcup_{a_1,\ldots,a_{n-1}\in\N} \left\{ x\in[0,1) :a_k(x)=a_k, 1\le k \le n-1,1 \le a_n(x) \le B_2^n,  a_{n+1}(x) \ge \frac{B_1^n}{a_n(x)} \right\} \\
&  =  \bigcup_{a_1,\ldots,a_{n-1}\in\N}  \bigcup_{1 \le a_n \le B_2^n}  \bigcup_{ a_{n+1} \ge \frac{B_1^n}{a_n}} I_{n+1}(a_1,\ldots,a_{n+1}) \\
& =   \bigcup_{\substack {a_1,\ldots,a_{n-1}\in\N \\ 1 \le a_n \le B_2^n }}  J_n(a_1,\ldots,a_n).
\end{align*}
Note that 
$$
J_n(a_1,\ldots,a_n) = \bigcup_{ a_{n+1} \ge \frac{B_1^n}{a_n}} I_{n+1}(a_1,\ldots,a_{n+1}),
$$
and
$$
|J_n(a_1,\ldots,a_n)| \asymp \frac{1}{B_1^n a_n q_{n-1}^2}.
$$
Now consider the $s$-volume of the cover of $U_{1}$:
$$
\sum\limits_{a_1,\ldots,a_{n-1}\in\N}  \sum\limits_{1 \le a_n \le B_2^n}  \left(  \frac{1}{B_1^n a_n q_{n-1}^2} \right)^s \asymp
 \sum\limits_{a_1,\ldots,a_{n-1}\in\N} B^{n(1-s)}_2 \left(  \frac{1}{B_1^n  q_{n-1}^2} \right)^s.
$$
Therefore, an $s$-dimensional Hausdorff measure of $U_1$ can be estimated as
$$
\HH^s(E(A_1,A_2)) \leq  \liminf_{N\to\infty}   \sum\limits_{n=N}^\infty    \sum\limits_{a_1,\ldots,a_{n-1}\in\N} B^{n(1-s)}_2 \left(  \frac{1}{B_1^n  q_{n-1}^2} \right)^s.
$$
Hence, we have that
$$
\dim_H  \FF_{B_1,B_2} \le \dim_H U_1 \le  g_{B_1,B_2}.
$$
To get a lower bound, recall a set \eqref{mainsubset} from Theorem \ref{dim E(A_1,A_2)} and let $A_1=B_2,  \ A_1 A_2=B_1$, so that we have 
\begin{equation*}
E=\left\{x\in[0,1):
\begin{split} 
 &\begin{split}&c_1 B_2^{n}\leq a_n(x) <2c_1 B_2^{n}, \\ & c_2\frac{B_1^n}{B_2^n} \leq a_{n+1}(x)<2  c_2\frac{B_1^n}{B_2^n},\end{split}&\text{ for i.m. } n\in\N \\
&1\leq a_n(x) \leq M &\text{for all other\,\,} n\in\N
\end{split}
\right\}.
\end{equation*}
Notice that if $B_2<B_1^{s_0}$, we have 
$$
\min \{s_{B_2},g_{B_1,B_2} \}  = g_{B_1,B_2}.
$$
So in this case we have
$$
\dim_H E \ge g_{B_1,B_2}.
$$
Since $ B_2 > B_1^{1/2}$, or $B_2^2>B_1$, one can easily see that $E$ is indeed a subset of $\dim_H  \FF_{B_1,B_2}$.
Hence
$$
\dim_H  \FF_{B_1,B_2} \ge g_{B_1,B_2}.
$$
Hence, together with the upper bound,  when $B_1^{s_0} \ge B_2 > B_1^{1/2}$, we have that $\dim_H  \FF_{B_1,B_2} = g_{B_1,B_2}$.

\subsection{ Case $B_1^{1/2}\ge B_2$}
In this case let us investigate the conditions on partial quotients in the definition of $ \FF_{B_1,B_2}$. We know that $a_{n+1}<B_2^n$ and $a_{n}<B_2^{n-1}$.\\
Hence we get 
$$
a_n a_{n+1} <B_2^{2n-1}.
$$
But $B_1^{1/2}\ge B_2$, we come to a contradiction with $a_n a_{n+1} \ge B_1^n$. Hence in this case the set $ \FF_{B_1,B_2}$ is empty.

\section{Hausdorff dimension of  $\FF(\Phi_1,\Phi_2)$}\label{FPhi1Phi2}
To make it easier to follow, let us recall the statement of the result.
Let us consider arbitrary positive functions $\Phi_1$ and $\Phi_2$. Suppose

\begin{align*}
\log B_1 &= \liminf\limits_{n\to\infty} \frac{\log \Phi_1(n)}{n}  \quad \text{and} \quad  \log b_1 = \liminf\limits_{n\to\infty} \frac{\log\log \Phi_1(n)}{n}.\\
\log B_2 &= \lim\limits_{n\to\infty} \frac{\log \Phi_2(n)}{n}   \quad \ \ \text{and}\quad   \log b_2 = \lim\limits_{n\to\infty} \frac{\log\log \Phi_2(n)}{n}.
\end{align*}
Then


 \begin{enumerate}
\item  if $B_1^{s_0} \le B_2$ and $B_1$ is finite, then  $$    \dim_H \FF(\Phi_1,\Phi_2) = s_0.     $$ 
\item  if $B_1^{s_0} \ge B_2 > B_1^{1/2}$, then $$ \dim_H \FF(\Phi_1,\Phi_2) = g_{B_1,B_2}.$$
\item  if $B_1^{1/2}> B_2$ and $B_2$ is finite, then $$  \FF(\Phi_1,\Phi_2) = \emptyset .$$
\item  if  $B_1= B_2=\infty$, $1\le b_1 < b_2$ and $b_1$ is finite, then $$  \dim_H \FF(\Phi_1,\Phi_2) = \frac{1}{1+b_1}.  $$
\item  if  $B_1= B_2=\infty$, $b_1> b_2\ge1$ and $b_2$ is finite, then $$  \FF(\Phi_1,\Phi_2) = \emptyset .$$
\item  if  $B_1= B_2=\infty$, $b_1=b_2=\infty$, then 
$$
\text{either\,\, }  \FF(\Phi_1,\Phi_2) = \emptyset  \text{\,\,  or\,\, }    \dim_H \FF(\Phi_1,\Phi_2) = 0. 
$$

\end{enumerate}

\begin{proof}
\,
\begin{enumerate}
\setlength\itemsep{2em}
\item $B_1^{s_0} \le B_2$ and $B_1$ is finite. \\
For any $\epsilon>0$, one has \
$$
\Phi_1(n) \ge (B_1 - \epsilon)^n, \text{\,\, for all \,} n \gg 1.
$$
Thus 
$$
\FF(\Phi_1,\Phi_2) \subset  \left\{  x\in [0,1) : a_{n}(x) a_{n+1}(x) \ge (B_1 - \epsilon)^n, \,\,\text{ i.m. n } \in\N \right\}.
$$
By Theorem \ref{HWXthm} and continuity of dimensional number established in \cite{HuWuXu}, we have
$$
 \dim_H \FF(\Phi_1,\Phi_2) \le s_{0}.
$$
As for the lower bound, fix $\widetilde{B_1} > B_1$ and $\widetilde{B_2} < B_2$, where we take finite $\widetilde{B_2}$ independently of whether $B_2$ is finite or not. Then by the definition of $B_1$, we can choose a largely sparse integer sequence $\{n_j\}_{j\ge1}$ such that for all $j\ge1$,
 
$$
\Phi_1(n_j) \le \widetilde{B_1}^{n_j}.
$$
 By the definition of $B_2$ one has 
$$
\Phi_2(n) \ge  \widetilde{B_2}^n, \text{\,\, for all \,} n \gg 1. 
$$
So we can see that
$$
\FF(\Phi_1,\Phi_2) \supset \left\{x\in[0,1):
\begin{split} 
 a_{n_j}(x)a_{n_j+1}(x) & \geq \widetilde{B_1}^{n_j} &\text{for all } j\in\N \\
 a_{n+1}(x) & < \widetilde{B_2}^n  &\text{for all sufficiently large } n\in\N
\end{split}
\right\}.
$$
In the definition of \eqref{mainsubset} we required that $n_{k+1}-n_k-2$ is a multiple of some large integer $N$. Here we cannot guarantee this. However, we can define a slightly different subset as $\widetilde{E}$.\\
Still fix a large integer $N$ and an integer $M$. Let 
$$
n_{k+1}-n_k-2=\ell_{k+1}N+i_{k+1}, \text{\,\, for some } 0 \le i_{k+1} < N.
$$
Denote
$$
y_k = n_{k-1} + 1 + \ell_k N.
$$


Now we consider the following set, which is the subset of our set under consideration:
\begin{align*}
\widetilde{E} = \Bigg\{ x\in [0,1) :\, & c_1\widetilde{B_1}^{s_0 n_j}  \le  a_{n_j}(x) < 2 c_1 \widetilde{B_1}^{s_0 n_j}, c_2 \left(\frac{\widetilde{B_1}}{\widetilde{B_1}^{s_0}} \right)^{n_j} \le a_{n_j+1}(x) < 2 c_2 \left(\frac{\widetilde{B_1}}{\widetilde{B_1}^{s_0}}\right)^{n_j},  \\
& \a_n(x) = 2 \text{\,\, for all } y_j<n<n_j, j\ge 1, \text{\,\, and } 1 \le a_n(x)   \le M \text{\,\, for other } n\in\N  \Bigg\}.
\end{align*}
The rest of the argument can be carried out with no more changes to conclude that 
$$
\hdim \FF(\Phi_1,\Phi_2) \ge s_{\widetilde{B_1}^{s_{0}}}.
$$
By the continuity of $s_{B}$ and using the upper bound, we have $\hdim \FF(\Phi_1,\Phi_2) = s_0$.

\item  $B_1^{s_0} \ge B_2 > B_1^{1/2}$.\\
To get the upper bound for dimension of $\FF(\Phi_1,\Phi_2)$ in this case, we can take the following set $U_2$, which is a superset of $\FF(\Phi_1,\Phi_2)$:
$$
U_2 = \left\{ x: 1 \le a_n(x) \le (B_2+\epsilon_2)^n, \  a_{n+1}(x) \ge \frac{(B_1-\epsilon_1)^n}{a_n(x)} \text{\,\, for i.m. \,\,}  n\in \N\right\}.
$$
Now one can repeat the argument from the subsection \eqref{subsection42}. By the continuity of $g_{B_1,B_2}$ with respect to $B_1$ and $B_2$ we get that
$$ 
\dim_H \FF(\Phi_1,\Phi_2) \le g_{B_1,B_2}.
$$
For the lower bound, we can completely repeat the argument from the lower bound in last case, but for a slightly different set, namely
\begin{align*}
\widetilde{E} = \Bigg\{ x\in [0,1) :& \widetilde{B_2}^{ n_j} c_1 \le  a_{n_j}(x) < 2 c_1 \widetilde{B_2}^{n_j}, c_2 \left(\frac{\widetilde{B_1}}{\widetilde{B_2}} \right)^{n_j} \le a_{n_j+1}(x) < 2 c_2 \left(\frac{\widetilde{B_1}}{\widetilde{B_2}}\right)^{n_j},  \\
& a_n(x) = 2 \text{\,\, for all } y_j<n<n_j, j\ge 1, \text{\,\, and } 1 \le a_n(x)   \le M \text{\,\, for other } n\in\N  \Bigg\}.
\end{align*}
By the continuity of  $g_{B_1,B_2}$ in corollary \ref{cor2.1} and using the upper bound, we have $\hdim \FF(\Phi_1,\Phi_2) =  g_{B_1,B_2}$.
\item $B_1^{1/2}> B_2$ and $B_2$ is finite.\\
 If $B_1 = +\infty$, then obviously our set is empty. Hence we can assume that $B_1 < \infty$. By definition of $B_1$ and $B_2$ we know that $\Phi_1(n)\ge (B_1-\varepsilon_1)^n$ for $n$ large enough with any $\varepsilon_1>0$ and $\Phi_2(n)\le(B_2+\varepsilon_2)^n$ for $n$ large enough with any $\varepsilon_2>0$.\\
By conditions of our set we know that 
$$
a_{n+1} < \Phi_2(n) \le(B_2+\varepsilon_2)^n \text{\,\,\, and \,\,\,}  a_n < \Phi_2(n) \le(B_2+\varepsilon_2)^{n-1}.
$$
Combining these together, we have that $a_n a_{n+1} < (B_2 + \varepsilon_2)^{2n-1}$. On the other hand, the second condition of our set is  
$$
a_{n+1}a_n \ge \Phi_1(n) \ge (B_1-\varepsilon_1)^n.
$$
In our case we know that $B_1>B_2^2$, or we can rewrite it as $B_1 = B^2_2 + \delta$ for some fixed $\delta>0$. Let $\delta' = \sqrt{B_2^2+\delta/2}-B_2$ and take $\varepsilon_1 < \delta/2$ and $\varepsilon_2 < \delta'$. Now we easily come to a contradiction between conditions, since
$$
 (B_1-\varepsilon_1)^n = (B_2^2+\delta - \varepsilon_1)^n >  (B_2^2+\delta/2)^n = (B_2 + \delta')^{2n} >  (B_2 + \varepsilon_2)^{2n-1},
$$
which contradicts our conditions. Hence our set is empty.

\item  $B_1= B_2=\infty$, $1\le b_1 < b_2$ and $b_1$ is finite.\\
First, let us assume that $b_1=1$. Note that $B=\infty$, therefore for any $C>1$, $\Phi_1(n) \ge C^n$ for all sufficiently large $n\in\N$. Thus one can see that
$$
  \FF(\Phi_1,\Phi_2) \subset  \{ x\in [0,1) : a_n(x) \ge  C^n \text{ \,\, i.m. } n\in\N \}.
$$
Then by Theorem \ref{WaWu} we have
$$
\hdim  \FF(\Phi_1,\Phi_2) \leq \lim_{C\to\infty} s_C = \frac{1}{2}.
$$
Now for the lower bound notice that $b_2=1+\delta$ for some fixed $\delta>0$ and that for every $0<\epsilon<\frac{\delta}{2}$ one has
$$
\Phi_1(n) \leq e^{(1+\epsilon)^n} \text{\,\, and \,\, } \Phi_2(n) \geq e^{(1+\delta-\epsilon)^n}.
$$
Now it is obvious that
$$
\FF(\Phi_1,\Phi_2) \supset \left\{x\in[0,1):
\begin{split} 
 a_{n}(x)a_{n+1}(x) & \geq e^{(1+\epsilon)^n} &\text{for i.m. } n\in\N \\
 a_{n+1}(x) & <e^{(1+\delta-\epsilon)^n}  &\text{for all sufficiently large } n\in\N
\end{split}
\right\}.
$$
At this point we will make use of Lemma \ref{helplemma}. Set $s_n = \frac{1}{2} e^{(1+\epsilon)^n}$. One can easily check that
$$
U_3 = \{ x\in[0,1) : s_n \le a_{n+1} < 2 s_n  \,\,\, \forall n\in\N \}
$$
is a subset of our original set.\\
Applying Lemma \ref{helplemma} to this choice of the sequence $s_n$, we get
\begin{align*}
\hdim  \FF(\Phi_1,\Phi_2)  &\geq   \lim_{\epsilon\to 0}\left( \liminf_{n\to\infty} \frac{\log(s_1 s_2\cdots s_n) }{2\log(s_1 s_2\cdots s_n)+\log s_{n+1}}\right) \\
&=  \lim_{\epsilon\to 0}\left( \liminf_{n\to\infty} \frac{1-\frac{1}{(1+\epsilon)^n} }{2+\epsilon-\frac{2}{(1+\epsilon)^n}}\right) \\ &=  \lim_{\epsilon\to 0} \frac{1}{2+\epsilon} = \frac{1}{2}.
\end{align*}
\\
Now let $b_1>1$. By the definition of $b_1$, for any $\epsilon>0, \frac{\log\log\Phi_1(n)}{n} \leq \log (b_1 + \epsilon)$ that is $\Phi_1(n) \leq e^{(b_1+\epsilon)^n}$ for infinitely many $n\in\N$, whereas $\Phi_1(n) \geq   e^{(b_1-\epsilon)^n}$ for all sufficiently large $n\in\N$.
Note that
$$
 \FF(\Phi_1,\Phi_2) \subset  \left\{ x\in [0,1) : a_n(x) \ge    e^{\frac{1}{2(b_1-\epsilon)}(b_1-\epsilon)^n} \text{ \,\, for  i.m. } n\in\N \right\}.
$$
Therefore, by using Lemma \ref{lemb} we get an upper bound
$$
\hdim \FF(\Phi_1,\Phi_2) \leq \lim_{\epsilon\to0} \frac{1}{1+b_1-\epsilon} = \frac{1}{1+b_1}.
$$
Now for the lower bound we use Lemma \ref{helplemma} once again. Set $s_n = \frac{1}{2} e^{(b_1+\epsilon)^n}$. One can easily check that
$$
U_4 = \{ x\in[0,1) : s_n \le a_{n+1}(x) < 2 s_n  \,\,\, \forall n\in\N \}
$$
is a subset of our original set. Applying Lemma \ref{helplemma} to this choice of the sequence $s_n$, we get
\begin{align*}
\hdim  \FF(\Phi_1,\Phi_2)  &\geq   \lim_{\epsilon\to 0}\left( \liminf_{n\to\infty} \frac{\log(s_1 s_2\cdots s_n) }{2\log(s_1 s_2\cdots s_n)+\log s_{n+1}}\right) \\
& = \lim_{\epsilon\to 0}\left( \liminf_{n\to\infty} \frac{1-\frac{1}{(b_1+\epsilon)^n} }{b_1+\epsilon+1-\frac{2}{(b_1+\epsilon)^n}}\right) \\ &=  \lim_{\epsilon\to 0} \frac{1}{1+b_1+\epsilon} \\ &= \frac{1}{1+b_1}.
\end{align*}

\item  if  $B_1= B_2=\infty$, $b_1 > b_2\ge 1$ and $b_2$ is finite.\\
We want to show that in this case our set is empty. Let us first assume that $b_1$ is finite. Then $b_1 = b_2 + \delta$ for some fixed $\delta>0$. Next, by the conditions of our set, for every $\epsilon_2>0$ one has
$$
a_{n} < \Phi_2(n) \le e^{(b_2+\epsilon_2)^{n-1}}, \  a_{n+1} < \Phi_2(n) \le e^{(b_2+\epsilon_2)^n}.
$$
Combining these together, we have that
$$
a_{n} a_{n+1} <  e^{(b_2+\epsilon_2)^{n-1}+(b_2+\epsilon_2)^n}.
$$
On the other hand, the second condition of our set is that for every $\epsilon_1>0$ one has
$$
a_n a_{n+1} \ge \Phi_1(n) \ge e^{(b_1-\epsilon_1)^{n}}  =  e^{(b_2+\delta-\epsilon_1)^{n}}.
$$
We can take  $\epsilon_1$ and $\epsilon_2$ small enough, so that
\begin{align*}
\log a_n a_{n+1} &\ge (b_2+\delta-\epsilon_1)^n > (b_2+\delta/2)^n   \\
& >2(b_2+\epsilon_2)^n > (b_2+\epsilon_2)^{n-1}+(b_2+\epsilon_2)^n > \log a_n a_{n+1}.
\end{align*}
This clearly gives us a contradiction for $n$ large enough, because $\delta$ is a fixed positive number. Hence our set is empty. Now if $b_1=+\infty$, then simply for some finite $b_1' > b_2^2 $ one has
$$
a_n a_{n+1} \ge \Phi_1(n) \ge e^{(b_1'-\epsilon_1)^{n}},
$$
and then proceed like for finite $b_1$ earlier.

\item $B_1= B_2=\infty$, $b_1=b_2=\infty$.  \\

From the definition of $b_1$, for every $C>1$ we have $\Phi_1(n) \ge e^{C^n} $. So we can consider the following superset of our set $ \FF(\Phi_1,\Phi_2)$:
$$
 \FF(\Phi_1,\Phi_2) \subset \left\{ x\in [0,1) : a_n(x) \ge  e^{C^n} \text{ \,\, i.m. } n\in\N \right\}.
$$
Then by Lemma \ref{lemb}
$$
\hdim \FF(\Phi_1,\Phi_2) \leq \lim_{C\to\infty} \frac{1}{C+1} = 0.
$$
So if the set is non-empty, then its dimension is 0. However, it can happen that under conditions of this case the set $ \FF(\Phi_1,\Phi_2)$ is empty.
Recall the conditions of our set,
$$ a_{n+1} < \Phi_2 (n),\, a_n < \Phi_2(n-1) \text{\,\, and \,\, } a_n a_{n+1} \ge \Phi_1 (n).$$
So if for all $n$ large enough
$$
\Phi_2(n) \Phi_2(n-1) \leq \Phi_1(n),
$$
then the set $ \FF(\Phi_1,\Phi_2)$ is empty because conditions are incompatible.

\end{enumerate}
\end{proof}

\section{Proofs of the propositions}\label{examples}

In this section we prove Propositions \ref{Prop1} and \ref{Prop2}.
\subsection{Proof of Proposition \ref{Prop1}} Note that this proposition concerns the case $B_1=B_2^2$. Recall that
$$
E_1 =\left\{x\in[0,1):
\begin{split} 
 a_n(x)a_{n+1}(x) & \geq B_2^{2n} \text{\,\, for infinitely many } n\in\N \\
 a_{n+1}(x) & < B_2^n  \text{\,\, for all sufficiently large } n\in\N
\end{split}
\right\}.
$$
Then simply by Theorem \ref{specific}, this set is empty.\\
Our second set is
$$
E_2 =\left\{x\in[0,1):
\begin{split} 
 a_n(x)a_{n+1}(x) & \geq B_2^{2n-1} \text{\,\, for infinitely many } n\in\N \\
 a_{n+1}(x) & <3 B_2^{n+1}  \text{\,\, for all sufficiently large } n\in\N
\end{split}
\right\}.
$$
To get the lower bound on the Hausdorff dimension, we take the subset
 \begin{align*}
F = \{ x\in[0,1)\, : \,  B_2^{n}\leq a_n(x) <2 B_2^{n},& \  B_2^n \leq a_{n+1}(x)<2  B_2^n,\,\, \text{i.m. } n\in\N; \\
& \text{ and } 1\leq a_n(x) \leq M \text{\,\, for all other\,\,} n\in\N \}.
\end{align*}
It is indeed a subset of $E_2$, because for the first condition of $E_2$ we have
$$
a_n (x)a_{n+1}(x) \ge B_2^n B_2^n = B_2^{2n} > B_2^{2n-1},
$$
and for the second one we have
$$
a_{n+1}(x) < 2B_2^n < 3B_2^{n+1},
$$
$$
a_n(x) < 2B_2^n < 3 B_2^n.
$$
We dealt with set $F$ in Theorem \ref{dim E(A_1,A_2)}, which tells us that its dimension is $g_{B_1,B_2}$. So, we have that
$$
\hdim E_2 \ge g_{B_1,B_2}.
$$
The upper bound of dimension for the set $E_2$ is clear by the result for the case $B_1^{1/2} < B_2$ in Theorem \ref{specific}, since $E_2$ is becoming larger when we decrease the value of $B_1$.
And so $\hdim E_2 = g_{B_1,B_2}.$
\subsection{Proof of Proposition \ref{Prop2}} Note that this proposition concerns the case $B_1 = B_2 = \infty$, $b_1=b_2<\infty$. The first set in consideration is $$
P_1 =\left\{x\in[0,1):
\begin{split} 
 a_n(x)a_{n+1}(x) & \geq e^{b_1^{n}} \text{\,\, for infinitely many } n\in\N \\
 a_{n+1}(x) & < e^{b_1^{n}}  \text{\,\, for all sufficiently large } n\in\N
\end{split}
\right\}.
$$
This set is  a particular case of the set $\FF (\Phi)$ from Theorem \ref{BBHthm} and its Hausdorff dimension by this theorem is equal to $\frac{1}{b_1+1}$.\\
The second set is
$$
P_2 =\left\{x\in[0,1):
\begin{split} 
 a_n(x)a_{n+1}(x) & \geq e^{5b_1^{n}} \text{\,\, for infinitely many } n\in\N \\
 a_{n+1}(x) & <  e^{b_1^{n-1}}  \text{\,\, for all sufficiently large } n\in\N
\end{split}
\right\}.
$$
This set is empty, because the  conditions are incompatible. Indeed, on the one hand we have
$$
a_n(x) a_{n+1}(x) <  e^{b_1^{n-1}+b_1^{n-2}}.
$$
On the other hand,
$$
a_n (x)a_{n+1}(x) \ge  e^{5b_1^{n}}.
$$
Combining this together, we get $5b_1^2 < b_1 +1$, which can happen only when $b_1$ is less than 1. But we know that $b_1 \ge 1$, hence the set $P_2$ is empty.

\end{document}